\documentclass[11pt]{article}
\usepackage{amssymb,amsmath,amsthm,amsfonts}
\usepackage[dvips]{graphicx}
\textwidth=16.5cm \textheight=24cm
\def\disp{\displaystyle}

\oddsidemargin -0.5cm \headsep=-2cm \raggedbottom

\def\crr{\cr\noalign{\vskip2mm}}
\def\dfrac{\displaystyle\frac}

\def\dref#1{(\ref{#1})}

\theoremstyle{plain}
\newtheorem{theorem}{Theorem}[section]
\newtheorem{lemma}{Lemma}[section]

\numberwithin{equation}{section}

\theoremstyle{definition}
\newtheorem{definition}{Definition}

\newtheorem{remark}{Remark}[section]

\newcommand{\R}{{\mathbb R}}

\newcommand{\N}{{\mathbb N}}
\def\D{{\cal D}}
\def\Z{\mathbb{Z}}
\begin{document}

 \title{{\bf     On  Existence and Uniqueness of the Weak Solution of a eralized Boussinesq
       Equation with  Press  and Mixed Boundary Conditions
    }\footnote{This work was
   supported by   the National Natural Science Foundation of China, the
   National Basic Research Program of China (2011CB808002),  and the
   National Research Foundation of South Africa. }}
   \date{19 June, 2012}
   \author{Gol Kim$^{a}$   and  Bao-Zhu Guo$^{b,c}$\footnote{ The corresponding hor.
   Email: bzguo@iss.ac.cn} \\
   $^a${\it Center of Natural Sciences, University of Sciences, DPR
   Korea}\\
   $^b${\it Academy of Mathematics and Systems Science, Academia
   Sinica}\\ {\it Beijing 100190,   China}
   \\
   $^c${\it  School  of Computational and Applied Mathematics}\\
    {\it University of the Witwatersrand, Wits 2050, Johannesburg, South
    Africa}}

\maketitle
\begin{center}
\begin{abstract}
In this paper, the existence and uniqueness of weak solution for a
generalized Boussinesq  equation that couples the mass and heat
flows in a viscous incompressible fluid  is considered.  The
viscosity and the heat conductivity are assumed to depend  on the
temperature. The boundary condition on  velocity of the fluid is
non-standard where the dynamical pressure is given on some part of
the boundary, and the  temperature of the fluid is represented in a
mixed boundary condition.

 \vspace{0.3cm}

{\bf Keywords:}~   Generalized Boussinesq equation, press boundary
condition, mixed condition, Galerkin approximation.

 \vspace{0.3cm}

{\bf AMS subject classifications:}~ 93C20, 93D15, 35B35, 35P10

\end{abstract}

\end{center}

\section{Introduction}
\setcounter{equation}{0}

The existence and uniqueness of the solution for the generalized
Boussinesq equation have been studied extensively by many authors,
see \cite{[3],[2],[1],[4],[5]} and the references therein. However,
the boundary conditions in these works are Dirichlet type for
velocity of the fluid.  In \cite{[16]} the blow-up and global
existence for nonlinear parabolic equations with Neumann boundary
conditions is considered. The recent works on the existence and
uniqueness of the solution of Boussinesq equation can be found in
\cite{[14],[13],[12],[11]}. \cite{[11]} studies the  existence and
uniqueness of stationary Boussinesq system
    with non-smooth mixed boundary conditions for the temperature, and
    non-smooth Dirichlet boundary condition for the velocity.
The local existence and uniqueness  of the solution for  the
boundary-value problem for the stationary Boussinesq heat and mass
transfer equations with the inhomogeneous non-standard boundary
conditions for the velocity,  and the mixed boundary conditions for
the temperature and concentration are obtained in \cite{[12]}.
\cite{[13]} investigates  the existence and uniqueness of weak
solutions of the stationary Boussinesq system with the homogeneous
Dirichlet boundary conditions for the velocity and temperature. The
 the existence and uniqueness of weak solutions of the
time-periodic solutions for a generalized Boussinesq equation  with
Neumann boundary conditions for temperature are studied. The global
regularity of the classical solutions for a 2D Boussinesq equation
with the vertical viscosity and  diffusivity has been investigated.

In this paper, we are concerned with the  existence and uniqueness of the solution of
a generalized Boussinesq equation with nonlinear diffusion of
the
velocity and temperature, where the press boundary condition on the velocity
of the fluid  and the mixed boundary condition on the temperature are
given, The system is described by the following initial-boundary conditions:
\begin{equation}\label{1}
\left\{\begin{array}{ll} \disp \dfrac{\partial z(x,t)}{\partial t}-\gamma(w(x,t))\triangle
z(x,t)+(z(x,t),\nabla)z(x,t)-\beta g w(x,t) \crr
\hspace{4cm} =f_1(x,t)-\nabla \pi(x,t) & \hbox{ on
}Q,\crr\disp
{\rm div}(z(x,t))=0 & \hbox{ on }  Q,\crr\disp
\dfrac{\partial w(x,t)}{\partial t}-{\rm div}(k(w(x,t))\nabla
w(x,t))+(z,\nabla)w=f_2(x,t) &  \hbox{ on }  Q,\crr\disp
z(x,t)_\tau=0,\; \pi(x,t)+\dfrac{1}{2}|z(x,t)|^2=v_1(x,t),\; w(x,t)=0 & \hbox{ on }
\Sigma_1, \crr\disp
z(x,t)=0, \;-k(w(x,t))\dfrac{\partial w(x,t)}{\partial n}=v_2(x,t) & \hbox{ on
}\Sigma_2, \crr\disp
z(x,0)=z_1(x), \;w(x,0)=w_0(x) &\hbox{ on }\Omega,
\end{array}\right.
\end{equation}
where $\Omega\subset \R^N (N=2,3)$ is a bounded domain with the
smooth boundary $\Gamma=\Gamma_1\cup \Gamma_2$ where
$\Gamma_1\cap\Gamma_2\neq \emptyset$, $Q=\Omega\times (0,T)$ for the
given $T>0$, $\Sigma_i=\Gamma_i\times (0,T), i=1,2$,
$\Sigma=\Gamma\times (0,T)$,  $n$ is the normal vector exterior to
$\Gamma$, $z(x,t)\in \R^N$ denotes the velocity of the fluid at
$x\in \Omega$ and time $t\in (0,T)$, $\pi(x,t)\in \R$ is the
hydrostatic pressure, $w(x,t)\in \R$ is the temperature, $f_1(x,t)$
is the external force, $g$ is the gravitational vector function,
$\gamma(\cdot)>0$ is the kinematic viscosity, $k(\cdot)>0$ is the
thermal conductivity, $\beta$ is a positive constant associated to
the coefficient of volume expansion, $f_2(x,t)$ is the heat source
strength,   and $v_1,v_2$ are prescribed functions to be determined
later. The i-th component of $(z(x,t),\nabla z(x,t))z(x,t)$ in
Cartesian coordinate is given by
$$
((z(x,t),\nabla)z(x,t))_i=\sum_{j=1}^N z_j(x,t) \dfrac{\partial
z_j(x,t)}{\partial x_j}, \; (z(x,t),\nabla)w(x,t)=\sum_{j=1}^N
z_j(x,t) \dfrac{\partial w(x,t)}{\partial x_j},
$$
and
$$
z_\tau(x,t)=z(x,t)-z_n(x,t)\cdot n, \; z_n(x,t)=(z(x,t)\cdot n).
$$
The classical Boussinesq equation where $\gamma$ and $k$ are
constants has been studied widely in literature, see for instance
\cite{[7],[6]}. Equation \dref{1} that is paid less attention
represents the physical system for which we the variation of the
fluid viscosity,  and the thermal conductivity  with temperature
cannot be ignored (see e.g., \cite{[5]} and the references therein).
Some researches on the generalized Boussinesq equation and the
Boussinesq system with nonlinear thermal diffusion are available in
\cite{[3],[2],[1],[4]} but the  boundary conditions in this works
are homogeneous.  It should be pointed out that  the variation of
the viscosity with the temperature is important in understanding the
process of the flow.  As a first step to this point, we establish,
in this paper, the the existence of non stationary solution of
Equation \dref{1} where the dynamical pressure is given on some part
of the boundary,  and the boundary condition for  temperature of the
fluid is mixed. Due to the stronger nonlinear coupling between the
equations, Equation \dref{1} is much difficult than the
 classical Boussinesq equations.

We proceed as follows. In section 2, we give some preliminary
results on Equation \dref{1}. With the  Galerkin method, we show, in
section 3 the existence
   of the global solution to the problem  \dref{1} under certain conditions on the
    temperature dependency of the viscosity and the thermal conductivity.
The assumption on non-constant gravitational field would be helpful
in some other  geophysical models. The uniqueness of the weak
solution is presented in section 4.

\section{Preliminary results }\label{Sec2}

For notation simplicity, we do not distinguish the functions defined
on the real number field $\R$ or the $N$-dimensional Euclidean space
$\R^N$, which is clear from the context. The $L^2(\Omega$ inner
product and the norm induced by the inner product are denoted by
$(\cdot,\cdot)$ and $|\cdot|$ respectively. The norm of the Sobolev
space $H^m(\Omega)$ is denoted by $\|\cdot\|_m$ which is reduced to
$L^2(\Omega)$ when $m=0$. $H^{-1}(\Omega)$ is the dual space of
$H_0^1(\Omega)$.
 We use $\D(0,T)$ to denote the $C^\infty$-functions with the compact on $(0,T)$ and
$\D(0,T)'$ the space of distribution associated with $ \D(0,T)$.

Now we introduce some function spaces as follows.
\begin{equation}\label{guo1}
\left\{\begin{array}{l} D=\{ \psi\in (C^\infty(\Omega))^N|\; {\rm
div}(\psi(x)=0 \hbox{ on
}\Omega \hbox{ and }\phi_\tau(x)=0 \hbox{ on }\Gamma_2\},\\
H=\hbox{ completion of } D \hbox {under the }(L^2(\Omega))^N-\hbox{norm }, \\
V=\hbox{ completion of } D \hbox {under the }(H^1(\Omega))^N-\hbox{norm }, \\
D_{\Gamma_1}=\{\varphi \in C^\infty(\Omega)|\; \varphi(x)=0 \hbox {
on }\Gamma_1\}, \\
\widetilde{H}=\hbox{ closure of }D_{\Gamma_1} \hbox{ in }
L^2(\Omega), \\
W=\hbox{ closure of }D_{\Gamma_1} \hbox{ in } H^1(\Omega).
\end{array}\right.
\end{equation}
The norm of $H$ is denoted by $|\cdot|$ and the norm of $V$ is denoted by $\|\cdot\|$. If there is no the anxiety confused in the notation, then we will also denote the norm of $\tilde{H}$ is by $|\cdot|$ and the norm of $W$ is denoted by $\|\cdot\|$.

Suppose that $(z,w)$ is  classical solution of \dref{1}.

Now, specially, we assume that $v_1,f_1\in L^2(0;T;(L^2(\Gamma_1))^N)$ and $ v_1,f_2\in L^2(0;T;(L^2(\Gamma_1)))$.

Multiplier the first equation of \dref{1} by $\phi\in V$,
 integrate by parts over $\Omega$ and take the boundary condition into account  to get
\begin{equation}\label{7}
\dfrac{d}{dt}(z,\psi)+a_{\gamma(w)}(z,\psi)+b(z,z,\psi)-(\beta g
w,\phi)=(f_1,\psi)+(\nu_1,\psi_n)_{\Gamma_1}, \;
\psi_n= (\psi\cdot n)n.
\end{equation}
Multiplier the second  equation of \dref{1} by $\varphi\in W$,
 integrate by parts over $\Gamma$, and take the boundary condition into account  to obtain
\begin{equation}\label{8}
\dfrac{d}{dt}(w,\varphi)+a_{k(w)}(w,\varphi)+c(z,w,\varphi)=(f_2,\varphi)+(\nu_2,\varphi)_{\Gamma_2}.
\end{equation}
Now let $\chi\in C^1[0,T]$ be a function such that $\chi(T)=0$.
Multiplier Equations \dref{7} and \dref{8} by $\chi$ respectively,
and integrate by parts to yield
\begin{equation}\label{9}
\begin{array}{l}
\disp \int_0^T[-(z(t),\phi\chi'(t))+
a_{\gamma(w)}(z(t),\phi\chi(t))+b(z(t),z(t), \phi\chi(t))\crr\disp
+(\beta g w(t),\phi\chi(t))]dt =
\int_0^T(f_1(t),\phi\chi(t))dt\crr\disp
+\int_0^T\int_{\Gamma_1}(\nu_1,\phi_n\chi(t))dsdt+(z_0,\phi)\chi(0).
\end{array}
\end{equation}
\begin{equation}\label{10}
\begin{array}{l}
\disp \int_0^T[-(w(t),\varphi\chi'(t))+
a_{k(w)}(w(t),\varphi\chi(t))+c(z(t),w(t), \phi\chi(t))\crr\disp]dt =
\int_0^T(f_2(t),\varphi\chi(t))dt
+\int_0^T\int_{\Gamma_2}(\nu_2,\varphi\chi(t))dsdt+(w_0,\varphi)\chi(0),
\end{array}
\end{equation}
where we write $z(\cdot,t)=z(t), w(\cdot,t)=w(t)$ by abuse of
notation without confusion from the context, and
$$
\begin{array}{l}
\disp (z,\phi)=\sum_{j=1}^N\int_\Omega z_j(x,\cdot)\phi_j(x)dx, \;
(w,\varphi)=\int_\Omega (w(x,\cdot)\varphi(x)dx, \crr\disp
a_{\gamma(w)}(u,z)=(\gamma(w){\rm rot}z(x,\cdot),{\rm rot}\phi), \;
b(z,z,\phi)=\int_\Omega({\rm rot}z(x,\cdot)\times
z(x,\cdot))\phi(x)dx, \crr\disp
a_{k(w)}(w,\varphi)=\sum_{j=1}^N\int_\Omega k(w)\dfrac{\partial
w(x,\cdot)}{\partial x_j}\cdot \dfrac{\partial \varphi(x)}{\partial
x_j}dx, \; c(z,w,\varphi)=\sum_{j=1}^N\int_\Omega
z_j(x,\cdot)\dfrac{\partial w(x,\cdot)}{\partial x_j}\varphi(x)dx.
\end{array}
$$

We are now in a position to state some preliminary results.
Throughout the paper, we always assume that there are positive
constants $\gamma_0, \gamma_1, k_0, k_1$ such that
\begin{equation}\label{Assum}
\gamma_0 \le \gamma(t)\le \gamma_1, \; k_0\le k(t)\le k_1, \forall\;
t\in [0,\infty).
\end{equation}

The following Lemmas \ref{Le1}-\ref{Le3} can be obtained by the
Sobolev inequalities and the compactness theorem. We can also refer
to theorem 1.1 of \cite{[17]} on page 107 and lemmas 1.2, 1.3 of
\cite{[17]} on page 109 (see also chapter 2 of \cite{[9]}). The
similar arguments can also be found in lemmas 1, 5 of \cite{[18]}.

\begin{lemma}\label{Le1}
The bilinear forms $a_{\gamma(\cdot,\cdot)}$ and
$a_{k(\cdot,\cdot)}$ are coercive over  $V$ and $W$ respectivel
That is, there exist constants $c_1,\acute{c}_1>0$ such that
$$
a_{\gamma(w)}(z,z)\ge \gamma_0 c_1\|z\|_1^2, \forall\; z\in V \hbox{
and } a_{k(w)}(w,w)\ge k_0 c_1^\prime\|w\|_1^2, \forall\;
w\in W.
$$
\end{lemma}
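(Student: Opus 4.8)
The plan is to strip the temperature–dependent weights $\gamma(w)$ and $k(w)$ by means of the uniform lower bounds in \dref{Assum}, and then reduce each inequality to a Poincaré–Friedrichs type estimate on the relevant space. For the first form, the definition of $a_{\gamma(w)}$ together with \dref{Assum} gives
\[
a_{\gamma(w)}(z,z)=(\gamma(w){\rm rot}\,z,{\rm rot}\,z)=\int_\Omega\gamma(w(x))|{\rm rot}\,z(x)|^2\,dx\ge\gamma_0|{\rm rot}\,z|^2,\qquad\forall\,z\in V,
\]
so it suffices to establish the purely geometric inequality $|{\rm rot}\,z|^2\ge c_1\|z\|_1^2$ on $V$.

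To obtain this, I would invoke the vector analogue of the Poincaré inequality on a smooth bounded domain (theorem 1.1 and lemmas 1.2--1.3 of \cite{[17]}, or chapter 2 of \cite{[9]}): there is $C>0$ with $\|z\|_1^2\le C\bigl(|{\rm rot}\,z|^2+|{\rm div}\,z|^2+|z|^2\bigr)$. Since every $z\in V$ is an $(H^1(\Omega))^N$–limit of divergence–free fields, ${\rm div}\,z=0$, hence $\|z\|_1^2\le C(|{\rm rot}\,z|^2+|z|^2)$. The lower–order term $|z|^2$ is then absorbed by a compactness argument: if no such $c_1$ existed one could pick $z_n\in V$ with $\|z_n\|_1=1$ and $|{\rm rot}\,z_n|\to0$; by the Rellich theorem a subsequence converges in $(L^2(\Omega))^N$, and the displayed estimate upgrades this convergence to $(H^1(\Omega))^N$, so the limit $z$ lies in $V$, satisfies $\|z\|_1=1$, ${\rm div}\,z=0$ and ${\rm rot}\,z=0$; the tangential condition encoded in $V$ ($z_\tau=0$ on $\Gamma_2$, together with being a limit of elements of $D$) forces $z\equiv0$, a contradiction. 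This yields the first estimate with $c_1$ the resulting geometric constant, independent of $w$.

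The second estimate is more elementary. By the definition of $a_{k(w)}$ and \dref{Assum},
\[
a_{k(w)}(w,w)=\int_\Omega k(w(x))|\nabla w(x)|^2\,dx\ge k_0|\nabla w|^2,\qquad\forall\,w\in W,
\]
and since $W$ is the closure of $D_{\Gamma_1}$, every $w\in W$ has vanishing trace on $\Gamma_1$, which has positive surface measure; the Poincaré inequality for functions vanishing on a part of the boundary then gives $|\nabla w|^2\ge c_1'\|w\|_1^2$, so that $a_{k(w)}(w,w)\ge k_0c_1'\|w\|_1^2$.

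I expect the main obstacle to be the first part, precisely the claim that $\|{\rm rot}\,z\|_{L^2}$ alone controls the full $H^1$–norm on $V$: one must use the mixed tangential boundary condition built into $V$ and check that no nontrivial field in $V$ is simultaneously curl–free and divergence–free (which in general also involves the topology of $\Omega$). Once that triviality is in hand the compactness step is routine, and the weights $\gamma(w),k(w)$ enter only through the bounds in \dref{Assum}; the scalar case for $W$ is classical.
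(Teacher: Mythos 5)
Your argument is essentially the one the paper intends: the paper offers no written proof of Lemma~\ref{Le1}, only the remark that it follows from the Sobolev inequalities and the compactness theorem together with the cited results of \cite{[17]} and \cite{[9]}, which is exactly the curl--div estimate plus Rellich contradiction argument you spell out for $V$ (and the standard Poincar\'e inequality for functions vanishing on $\Gamma_1$ for the $W$ part). The one point you leave open --- that no nontrivial $z\in V$ is simultaneously curl-free and divergence-free, which genuinely depends on the topology of $\Omega$ and on the tangential boundary condition built into $V$ --- is honestly flagged in your proposal, and the paper does not address it either.
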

\begin{lemma}\label{Le2}
The trilinear form $b(\cdot,\cdot,\cdot)$ is a linear continuous
functional with respect to each variable defined on $(H^1(\Omega))^N$. That is, there exist a constant
$c_2>0$ such that
$$
|b(u,v,w)|\le c_2\|u\|\|v\|\|w\|, \forall\; u,v,w\in
(H^1(\Omega))^N.
$$
 Moreover, the following properties hold true

(i).~ $b(u,v,v)=0,  \forall \; u,v\in V$;

 (ii).~ $b(u,v,w)=-b(u,w,v), \forall\; u\in V, v,w\in
(H^1(\Omega))^N $;

(iii). If  $u_m\to u$ weakly on $V$  and  $v_m\to v$  strongly on
$H$,   then $b(u_m,v_m,w)\to b(u,v,w), \forall\; w\in V, w\in V$.
\end{lemma}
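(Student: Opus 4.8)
The plan is to dispatch the three assertions in turn, reducing each to a pointwise algebraic identity for the vector triple product together with the Sobolev embedding $H^{1}(\Omega)\hookrightarrow L^{4}(\Omega)$, which is available precisely because $N\le 4$ (in particular $N=2,3$). For the continuity estimate I would start from the pointwise bound $|(\mathrm{rot}\,u\times v)\cdot w|\le C\,|\nabla u|\,|v|\,|w|$ a.e.\ on $\Omega$, integrate over $\Omega$, and apply H\"older's inequality with exponents $2,4,4$ to get $|b(u,v,w)|\le C\,\|\nabla u\|_{L^{2}}\,\|v\|_{L^{4}}\,\|w\|_{L^{4}}$; bounding $\|\nabla u\|_{L^{2}}$ by $\|u\|$ and $\|v\|_{L^{4}},\|w\|_{L^{4}}$ by a Sobolev constant times $\|v\|,\|w\|$ then yields the claimed inequality with a suitable $c_{2}$, and finiteness plus trilinearity—hence linearity and continuity in each slot separately—is read off directly from the defining integral. (When $N=2$ one interprets $\mathrm{rot}\,u$ as the scalar $\partial_{1}u_{2}-\partial_{2}u_{1}$ and $\mathrm{rot}\,u\times v$ as $(\mathrm{rot}\,u)\,v^{\perp}$, $v^{\perp}=(-v_{2},v_{1})$; the estimate is unchanged.)

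For (i) and (ii) I would invoke that the scalar triple product $(\mathbf{a}\times\mathbf{b})\cdot\mathbf{c}=\det[\mathbf{a},\mathbf{b},\mathbf{c}]$ is alternating: transposing its last two arguments reverses the sign, which with $\mathbf{a}=\mathrm{rot}\,u$ gives $(\mathrm{rot}\,u\times v)\cdot w=-(\mathrm{rot}\,u\times w)\cdot v$ pointwise on $\Omega$ and hence, after integration, (ii); putting $w=v$ there forces $b(u,v,v)=-b(u,v,v)$, i.e.\ (i). These identities are purely pointwise and use neither $\mathrm{div}\,u=0$ nor any integration by parts, so the restriction $u,v\in V$ in the statement is only the one needed later. (In the planar case the same follows from $v^{\perp}\cdot v=0$ and $v^{\perp}\cdot w=-w^{\perp}\cdot v$.)

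Assertion (iii) is the substantial one, and it is where I expect the difficulty to lie. The plan is: using the cyclic invariance of the triple product, rewrite $b(u_{m},v_{m},w)=\int_{\Omega}\mathrm{rot}\,u_{m}\cdot(v_{m}\times w)\,dx$; note that $u_{m}\rightharpoonup u$ in $V$ makes $(u_{m})$ bounded in $(H^{1}(\Omega))^{N}$ and $\mathrm{rot}\,u_{m}\rightharpoonup\mathrm{rot}\,u$ weakly in $L^{2}(\Omega)$. For a \emph{smooth} test field $w\in D$ (so $w\in L^{\infty}$), the hypothesis $v_{m}\to v$ in $H$ gives $v_{m}\times w\to v\times w$ strongly in $L^{2}(\Omega)$, and since a weakly convergent sequence paired with a strongly convergent one in $L^{2}(\Omega)$ passes to the limit, $b(u_{m},v_{m},w)\to\int_{\Omega}\mathrm{rot}\,u\cdot(v\times w)\,dx=b(u,v,w)$. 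To extend from $w\in D$ to an arbitrary $w\in V$ I would pick $w^{(k)}\in D$ with $w^{(k)}\to w$ in $V$ and use the continuity estimate $|b(u_{m},v_{m},w-w^{(k)})|\le c_{2}\|u_{m}\|\,\|v_{m}\|\,\|w-w^{(k)}\|$ together with a routine three-term splitting. The delicate point—the main obstacle—is that this last step requires a uniform bound $\sup_{m}\|v_{m}\|<\infty$ in the $V$-norm, whereas the hypothesis literally furnishes only $v_{m}\to v$ in $H$; this is harmless in context, since whenever the lemma is applied the strong convergence of $(v_{m})$ in $H$ is itself produced, via Rellich compactness (or the Aubin--Lions lemma in the time-dependent setting), from a bound on $\|v_{m}\|$ in $V$, and that same bound guarantees $v\in V$, so that $b(u,v,w)$ is well defined. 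Equivalently, one may strengthen the hypothesis to $v_{m}\to v$ strongly in $L^{4}(\Omega)$, which is exactly what the argument uses and which follows from $(v_{m})$ being bounded in $(H^{1}(\Omega))^{N}$ through the compact embedding $H^{1}(\Omega)\hookrightarrow\hookrightarrow L^{4}(\Omega)$.
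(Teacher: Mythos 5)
Your proof is correct, and it is essentially the argument the paper has in mind: the paper gives no proof of Lemma 2.2 at all, merely remarking that Lemmas 2.1--2.3 follow from the Sobolev inequalities and the compactness theorem and pointing to Temam \cite{[17]} and \cite{[18]}, and your H\"older-$(2,4,4)$ estimate combined with $H^{1}(\Omega)\hookrightarrow L^{4}(\Omega)$ for $N=2,3$ is exactly the standard route those citations encode. Two remarks are worth adding. First, your observation that (i) and (ii) reduce to the pointwise alternating property of the scalar triple product, with no appeal to ${\rm div}\,u=0$ or to integration by parts, is correct and is genuinely tied to the rotational form $b(u,v,w)=\int_{\Omega}({\rm rot}\,u\times v)\cdot w\,dx$ adopted in this paper; for the convective form treated in Temam the antisymmetry does require the divergence-free condition and the boundary conditions, so on this point your argument is simpler than the cited one and it is right that the hypothesis $u,v\in V$ is not actually needed there. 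Second, the defect you flag in (iii) is real but sits in the lemma's statement, not in your proof: $v_{m}\to v$ strongly in $H$ alone neither guarantees $v\in V$ nor supplies the uniform bound on $\|v_{m}\|$ needed for the density step from smooth $w$ to general $w\in V$, and the missing boundedness of $(v_{m})$ in $V$ (equivalently, strong convergence in $L^{4}(\Omega)$) must be imported from the context in which the lemma is applied --- which it is, since in the proof of Theorem 3.1 the sequence $\{z_{m}\}$ is bounded in $L^{2}(0,T;V)$ before the strong $L^{2}(0,T;H)$ convergence is extracted. Your handling of this point is honest and adequate; no further repair is needed.
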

\begin{lemma}\label{Le3}
 The trilinear form $c(\cdot,\cdot,\cdot)$ is a  linear continuous functional with respect to each variable defined
on $V\times W\times W$. That is, there exist a constant $c_3>0$ such
that
$$
|c(z,w,\varphi)|\le c_3\|z\|\|w\|\|\varphi\|, \forall\; z\in V,
w,\varphi\in W.
$$
Moreover, the following properties hold true

(i).~ $c(u,w,w)=0,  \forall \; z\in V, w\in W$;

 (ii).~ $c(z,w,\varphi)=-c(z,\varphi,w), \forall\; z\in V, w,\varphi\in
W $;

(iii). If  $z_m\to z$ weakly on $V$  and  $w_m\to w$  strongly on
$L^2(\Omega)$,   then $c(z_m,w_m,\varphi)\to b(z,w,\varphi),
\forall\; z\in V, w\in L^2(\Omega), \varphi\in W$.
\end{lemma}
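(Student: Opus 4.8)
\noindent\textit{Sketch of the intended argument.} The plan is to dispatch the three assertions in turn, following the pattern already used for Lemma \ref{Le2}. For continuity, given $z\in V$ and $w,\varphi\in W$ I would write $c(z,w,\varphi)=\sum_{j=1}^N\int_\Omega z_j\,\frac{\partial w}{\partial x_j}\,\varphi\,dx$ and apply H\"older's inequality with exponents $4,2,4$ to each summand, so that $|c(z,w,\varphi)|\le\sum_j\|z_j\|_{L^4}\,\|\partial_j w\|_{L^2}\,\|\varphi\|_{L^4}$. Since $N\le 3$, the Sobolev embedding $H^1(\Omega)\hookrightarrow L^4(\Omega)$ is continuous, giving $\|z_j\|_{L^4}\le C\|z\|$, $\|\varphi\|_{L^4}\le C\|\varphi\|$ and $\|\partial_j w\|_{L^2}\le\|w\|$; summing over $j$ yields $|c(z,w,\varphi)|\le c_3\|z\|\,\|w\|\,\|\varphi\|$. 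Linearity in each argument is immediate from the formula.

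For the algebraic identities I would first establish (ii). For $z\in V$ and $w,\varphi\in W$ (smooth first, then by density using the continuity just proved) integrate by parts: $c(z,w,\varphi)+c(z,\varphi,w)=\sum_j\int_\Omega z_j\,\partial_j(w\varphi)\,dx=-\sum_j\int_\Omega(\partial_j z_j)\,w\varphi\,dx+\int_\Gamma(z\cdot n)\,w\varphi\,ds$. The volume term vanishes because $\mathrm{div}\,z=0$, and the boundary term vanishes because $\varphi$, hence $w\varphi$, vanishes on $\Gamma_1$ while $z$ vanishes on $\Gamma_2$; this is (ii). Taking $\varphi=w$ gives $2c(z,w,w)=0$, which is (i). (One could equally prove (i) directly with $\tfrac12\partial_j(w^2)$ in place of $\partial_j(w\varphi)$ and then recover (ii) by polarization, replacing $w$ by $w+\varphi$.)

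The substantive point is (iii). Assume $z_m\rightharpoonup z$ weakly in $V$ and $w_m\to w$ strongly in $L^2(\Omega)$, with $\{w_m\}$ bounded in $W$ (the boundedness being what the Galerkin estimates actually supply here). Because $N\le 3$, the embedding $H^1(\Omega)\hookrightarrow L^4(\Omega)$ is \emph{compact}, so along a subsequence $z_m\to z$ strongly in $(L^4(\Omega))^N$, and by uniqueness of the weak $L^2$-limit the whole sequence converges. Fixing $\varphi\in W\subset L^4(\Omega)$, the products $(z_m)_j\varphi$ then converge to $z_j\varphi$ strongly in $L^2(\Omega)$. Meanwhile $\{w_m\}$ bounded in $W$ together with $w_m\to w$ in $L^2(\Omega)$ forces $w_m\rightharpoonup w$ weakly in $W$, hence $\partial_j w_m\rightharpoonup\partial_j w$ weakly in $L^2(\Omega)$. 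Writing $c(z_m,w_m,\varphi)=\sum_j\int_\Omega(z_m)_j\varphi\,\partial_j w_m\,dx$ and pairing the strongly convergent factor with the weakly convergent factor in $L^2(\Omega)$, one passes to the limit term by term to get $c(z_m,w_m,\varphi)\to\sum_j\int_\Omega z_j\varphi\,\partial_j w\,dx=c(z,w,\varphi)$.

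The continuity bound and the two identities are routine. The genuine obstacle, exactly as in the Navier--Stokes theory, is (iii): a product of a weakly convergent and a merely $L^2$-convergent sequence need not converge, so one must use the compact Sobolev embedding $H^1\hookrightarrow\hookrightarrow L^4$ (available precisely because $N\le 3$) to upgrade the weak $V$-convergence of $z_m$ to strong $L^4$-convergence, and use the $W$-boundedness of $\{w_m\}$ to retain weak convergence of the gradients; with one factor in the strong topology and the other in the weak topology the pairing is stable. This is the mechanism behind Lemmas 1.2--1.3 of \cite{[17]} cited above, and the same reasoning underlies part (iii) of Lemma \ref{Le2}.
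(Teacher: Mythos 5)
The paper gives no proof of this lemma at all---it merely remarks that Lemmas \ref{Le1}--\ref{Le3} ``can be obtained by the Sobolev inequalities and the compactness theorem'' and cites Temam \cite{[17]}, \cite{[9]}---and your argument is exactly the standard one those citations point to: H\"older with exponents $4,2,4$ plus the embedding $H^1(\Omega)\hookrightarrow L^4(\Omega)$ for the continuity bound, integration by parts with $\mathrm{div}\,z=0$ and the boundary conditions for (i)--(ii), and the compact embedding for (iii). Your explicit observation that (iii) additionally needs $\{w_m\}$ bounded in $W$ (so that $\nabla w_m\rightharpoonup\nabla w$ in $L^2(\Omega)$) is a correct repair of the lemma's imprecise hypotheses, and that bound is precisely what the Galerkin estimates supply at the point where the lemma is invoked.
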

By considering  \dref{8},\dref{9}, we define the weak of \dref{1} such as;
\begin{definition}\label{De1}
 Let $Y\equiv Z\times W=(L^2(0,T;V)\cap
L^\infty(0,T;H))\times (L^2(0,T;W)\cap L^\infty(0,T;\tilde{H}))$.
Suppose that
$$
\begin{array}{l}
f_1\in L^2(0,T;V^*), f_2\in L^2(0,T;W^*), v_1\in
L^2(0,T;(H^{-1/2}(\Gamma_1))^N),  v_2\in
L^2(0,T;(H^{-1/2}(\Gamma_2)), \\
z_0\in H, w_0\in L^2(\Omega), g\in L^\infty(\Omega).
\end{array}
$$
The pair $\{z,w\}$ is said to be a weak solution of \dref{1} if it
satisfies
\begin{equation}\label{11}
\left\{\begin{array}{l}
\{z,w\}\in Y, z'\in L^1(0,T;V^*), w'\in L^1(0,T;W^*), \\
(z',\phi)+a_{\gamma(w)}(z,\phi)+b(z,z,\phi)+(\beta w g,\phi)=\langle
f_1,\phi\rangle+\langle v_1,\phi_n\rangle_{\Gamma_1}, \forall;
\phi\in
V, \\
(w',\varphi)+a_{k(w)}(w,\varphi)+c(z,w,\varphi) =\langle
f_2,\varphi\rangle+\langle v_2,\varphi\rangle_{\Gamma_2}, \forall;
\varphi\in W, \\z(0)=z_0, w(0)=w_0,
\end{array}\right.
\end{equation}
 where the  inner product in $(L^2(\Omega))^N$ is
 also denoted  by $(\cdot,\cdot)$ without confusion from the context, and
 that in $V^*$ and $V$ (also $H^{-1}(\Omega)$ and $H_0^1(\Omega)$)
 are denoted by $\langle \cdot,\cdot\rangle$.
\end{definition}

Next, we reformulate Equation \dref{11} in to the operator equation.
To this purpose, it is noticed  that for a fixed $\phi\in V$, the
functional $\phi (\in V)\to a_{\gamma(w)}(z,\phi)$  is linear
continuous. So there exists an $A_{\gamma} z\in V^*$ such that
\begin{equation}\label{12}
\langle A_{\gamma} z,\phi\rangle=a_{\gamma(w)}(z,\phi), \forall\;
\phi\in V.
\end{equation}
 Similarly, for fixed $u,v\in V$, $w(\in V)\to b(u,v,w)$ is a linear continuous functional on
 $V$.
 Hence  there exist a $B(u,v)\in V^*$ such that
\begin{equation}\label{13}
 \langle B(u,v),w\rangle =b(u,v,w), \forall\; w\in V.
 \end{equation}
We denote $B(u)=B(u,u)$. Define
\begin{equation}\label{14}
L_1(\phi)=\langle f_1,\phi\rangle+\langle v_1,\phi_n\rangle
_{\Gamma_1}, \forall\; \phi\in V.
\end{equation}
Then $L_1$   a linear continuous functional on $V$ and so there
exists a constant $c_4>0$ such that
\begin{equation}\label{15}
\|L_1\phi \|\le c_4\|\phi\|_V, \forall\; \phi\in V.
\end{equation}
Hence there exists a $F_1\in V^*$ such that $L_1(\phi)=\langle
F_1,\phi\rangle$ for all $\phi\in V$.

 By the operators defined above, we can write the second
equation of \dref{11} as
\begin{equation}\label{15g1}
\dfrac{dz}{dt}+A_{\gamma}z+B(z)+\beta g w=F_1.
\end{equation}
Similarly, we have
\begin{equation}\label{15g2}
 \langle A_{k} w,\varphi\rangle=a_{k(w)}(w,\varphi),\;  \langle C(z,w),\varphi\rangle=c(z,w,\varphi),
A_k w, C(z,w)\in W^*, \forall\; \varphi\in W.
\end{equation}
Define
\begin{equation}\label{15g3}
L_2(\varphi)=\langle f_2,\varphi\rangle+\langle v_2,\varphi\rangle
_{\Gamma_2}.
\end{equation}
Then $L_2$   a linear continuous functional defined on $W$ and so
there exists a constant $c_5>0$ such that
\begin{equation}\label{15g4}
\|L_2\varphi \|\le c_5\|\varphi\|_W, \forall\; \varphi\in W.
\end{equation}
Hence there exists a $F_2\in W^*$ such that $L_2(\varphi)=\langle
F_2,\varphi\rangle$ for all $\varphi\in W$. By these operators
defined above, we can write the first  equation of \dref{11} as
\begin{equation}\label{15g5}
\dfrac{dw}{dt}+A_{k}w+C(z,w)=F_2.
\end{equation}
Combining \dref{15g1} and \dref{15g5}, we can write \dref{11} in the
abstract evolution equation as follows:
\begin{equation}\label{16}
\left\{\begin{array}{l}
 \dfrac{dz}{dt}+A_{\gamma}z+B(z)+\beta g w=F_1,\\
\dfrac{dw}{dt}+A_{k}w+C(z,w)=F_2,\\
z(0)=z_0, w(0)=w_0.
\end{array}\right.
\end{equation}

 \begin{lemma}\label{Le6}
 If $z\in L^2(0,T;V)$, then $B(z)\in L^1(0,T;V^*)$;  and if $w\in
 L^2(0,T;W)$, then $C(z,w)\in L^1(0,T;W^*)$.
 \end{lemma}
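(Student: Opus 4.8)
The plan is to read off both assertions directly from the continuity estimates of Lemmas \ref{Le2} and \ref{Le3}, after which only a Hölder inequality in the time variable is needed.

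For the first claim, I would start from the definition \dref{13} of $B(u,v)\in V^*$ together with the bound $|b(u,v,w)|\le c_2\|u\|\,\|v\|\,\|w\|$ from Lemma \ref{Le2}. Taking the supremum over $w\in V$ with $\|w\|\le 1$ gives, for a.e.\ $t\in(0,T)$,
$$
\|B(z(t))\|_{V^*}=\|B(z(t),z(t))\|_{V^*}\le c_2\|z(t)\|^2 .
$$
Integrating in $t$ then yields $\int_0^T\|B(z(t))\|_{V^*}\,dt\le c_2\int_0^T\|z(t)\|^2\,dt=c_2\|z\|_{L^2(0,T;V)}^2<\infty$, so $B(z)\in L^1(0,T;V^*)$.

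For the second claim the argument is the same, using Lemma \ref{Le3}: from $|c(z,w,\varphi)|\le c_3\|z\|\,\|w\|\,\|\varphi\|$ and the definition of $C(z,w)\in W^*$ in \dref{15g2} one gets $\|C(z(t),w(t))\|_{W^*}\le c_3\|z(t)\|\,\|w(t)\|$ for a.e.\ $t$. Since here $z\in L^2(0,T;V)$ as well (this is part of the weak-solution class $Y$ in Definition \ref{De1}), the Cauchy--Schwarz inequality in time gives
$$
\int_0^T\|C(z(t),w(t))\|_{W^*}\,dt\le c_3\Big(\int_0^T\|z(t)\|^2\,dt\Big)^{1/2}\Big(\int_0^T\|w(t)\|^2\,dt\Big)^{1/2}=c_3\,\|z\|_{L^2(0,T;V)}\,\|w\|_{L^2(0,T;W)}<\infty,
$$
hence $C(z,w)\in L^1(0,T;W^*)$.

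The only genuine technical point, and the closest thing to an obstacle, is the strong (Bochner) measurability of $t\mapsto B(z(t))\in V^*$ and $t\mapsto C(z(t),w(t))\in W^*$; I would dispatch this in the standard way, approximating $z$ (and $w$) in $L^2(0,T;V)$ (resp.\ $L^2(0,T;W)$) by simple functions and using the bilinearity/trilinearity and continuity of $B$ and $C$ to pass to the limit, so that the $L^1$-in-time bounds above together with Pettis' theorem give Bochner integrability. Everything else is a direct consequence of Lemmas \ref{Le2}--\ref{Le3}, so no further estimates are required.
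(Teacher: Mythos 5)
Your proposal is correct and follows essentially the same route as the paper: a pointwise-in-time bound $\|B(z(t))\|_{V^*}\le C\|z(t)\|^2$ and $\|C(z(t),w(t))\|_{W^*}\le C\|z(t)\|\,\|w(t)\|$ followed by integration in $t$. The only cosmetic differences are that the paper re-derives the continuity estimate via H\"older and the embedding $H^1(\Omega)\subset L^4(\Omega)$ rather than citing Lemmas \ref{Le2}--\ref{Le3}, and bounds the product $\|z\|\,\|w\|$ by $\tfrac12(\|z\|^2+\|w\|^2)$ instead of using Cauchy--Schwarz in time; your extra remark on Bochner measurability is a point the paper silently omits.
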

\begin{proof}  Apply the H\"{o}lder inequality and the Sobolev
embedding theorem $H^1(\Omega)\subset L^4(\Omega)$ to obtain
$$
|(B(z),\phi)|=|b(z,z,\phi)|\le
c_6\|z\|_1\|z\|_{L^4}\|\phi\|_{L^4}\le c_6\|z\|_1^2\|\phi\|_{1}
$$
and hence $\|B(z)\|_{V^*}\le c_6 \|z\|^2_1$ which
  shows that $B(z)\in L^1(0,T;V^*)$. Similarly, we have
$$
|(C(z,w),\varphi)|=|-c(z,\varphi,w)|\le
c_7\|z\|_{L^4}\|\varphi\|\|w\|_{L^4}\le
c_8\|z\|\|\varphi|\|w\|\le
c_9(\|z\|^2+\|w\|^2)\|\varphi\|
$$
and hence $\|C(z,w)\|_{W^*}\le c_9(\|z\|_{1}^2+\|w\|_{1}^2)$ which
shows that $C(z,w)\in L^1(0,T;W^*)$ for all $\varphi\in W$. The
proof is complete.
\end{proof}
We specified the constants $c_i,i=1,2,\cdots,9$ are in
this section in the remaining part of the paper.  The following
Lemma \ref{Le7} comes from theorem 2.2 of \cite{[9]} on page 220.

\begin{lemma}\label{Le7}
Let $X_0, X, X_1$ be Hilbert spaces with the compact embedding
relations
$$
X_0 \hookrightarrow X \hookrightarrow X_1.
$$
The for any bounded set $K\subset \R, \nu>0$, the embedding
$H_K^\nu(\R;X_0,X_1)\subset L^2(\R;X)$ is compact, where
$$
\begin{array}{l}
H_K^\nu(\R;X_0,X_1)=\{v\in L^2(\R;X_0)|\; {\rm supp}(v)\subset K,
\hat{p}_t^\nu v\in L^2(\R;X_1)\}, \crr\disp  \hat{p}_t^\nu
v(\tau)=(2\pi i\tau)^\nu \hat{v}(\tau), \; \hat{v}(\tau)
=\int_{-\infty}^\infty e^{-2\pi i t} v(t)dt, \crr\disp
\|v\|_{H_K^\nu(\R;X_0,X_1)}=\left[\|v\|^2_{L^2(\R;X_0)}+\||\tau|^2\hat{v}\|^2\right].
\end{array}
$$
\end{lemma}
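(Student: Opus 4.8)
The plan is to reduce this to the classical Aubin–Lions–Simon compactness result via a Fourier-analytic argument à la Temam (indeed this is essentially Theorem 2.2 of \cite{[9]}). Let $(v_m)$ be a bounded sequence in $H_K^\nu(\R;X_0,X_1)$; we must extract a subsequence converging strongly in $L^2(\R;X)$. First I would record the two pieces of information the boundedness gives us: $\|v_m\|_{L^2(\R;X_0)}\le M$ and $\||\tau|^\nu\hat v_m\|_{L^2(\R;X_1)}\le M$, together with the common support condition $\mathrm{supp}(v_m)\subset K$. Since $K$ is bounded, the support restriction is what will let us trade a Fourier-side decay estimate for genuine compactness.

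The key steps, in order: (1) Since $X_0\hookrightarrow X$ compactly and $(v_m)$ is bounded in $L^2(\R;X_0)$ with support in $K$, for each fixed frequency truncation the sequence is, after the support cutoff, confined to a fixed compact slab; I would first show that $(v_m)$ is bounded in $L^2(\R;X)$ (from $X_0\hookrightarrow X$) and extract a subsequence converging weakly in $L^2(\R;X_0)$, hence weakly in $L^2(\R;X)$, to some limit $v$; it suffices to prove $\|v_m-v\|_{L^2(\R;X)}\to 0$. (2) By Plancherel, $\|v_m-v\|_{L^2(\R;X)}^2=\int_\R\|\hat v_m(\tau)-\hat v(\tau)\|_X^2\,d\tau$. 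Split this integral at $|\tau|\le R$ and $|\tau|>R$. (3) On the high-frequency part $|\tau|>R$: use $\|\hat v_m(\tau)-\hat v(\tau)\|_X\le C\|\hat v_m(\tau)-\hat v(\tau)\|_{X_1}$ (from $X\hookrightarrow X_1$, continuous), and bound $\int_{|\tau|>R}\|\hat v_m(\tau)\|_{X_1}^2\,d\tau\le R^{-2\nu}\int_{|\tau|>R}|\tau|^{2\nu}\|\hat v_m(\tau)\|_{X_1}^2\,d\tau\le M^2R^{-2\nu}$, uniformly in $m$; same for $v$. So the tail is $\le \varepsilon$ once $R$ is large, independently of $m$. (4) On the low-frequency part $|\tau|\le R$: here I would use the compact embedding $X_0\hookrightarrow X$ together with an Ehrling-type inequality, $\|u\|_X\le \eta\|u\|_{X_0}+C_\eta\|u\|_{X_1}$ for every $\eta>0$. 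Applied under the integral over $|\tau|\le R$ this gives $\int_{|\tau|\le R}\|\hat v_m-\hat v\|_X^2\le 2\eta^2\int_\R(\|\hat v_m\|_{X_0}^2+\|\hat v\|_{X_0}^2)+2C_\eta^2\int_{|\tau|\le R}\|\hat v_m-\hat v\|_{X_1}^2$. The first term is $\le 4\eta^2M^2$, small for small $\eta$; the second term requires showing $\hat v_m\to\hat v$ strongly in $L^2([-R,R];X_1)$.

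The main obstacle — and the heart of the proof — is step (4)'s strong convergence $\hat v_m\to \hat v$ in $L^2([-R,R];X_1)$. The mechanism is: because $\mathrm{supp}(v_m)\subset K$ with $K$ bounded, $\hat v_m(\tau)=\int_K e^{-2\pi i t\tau}v_m(t)\,dt$ is not merely in $L^2$ but is an analytic (entire) function of $\tau$ valued in $X_0$, and the family $(\hat v_m)$ is equicontinuous in $\tau$ with values in $X_1$: indeed $\|\hat v_m(\tau)-\hat v_m(\sigma)\|_{X_1}\le \int_K|e^{-2\pi it\tau}-e^{-2\pi it\sigma}|\,\|v_m(t)\|_{X_1}\,dt\le 2\pi|K|^{1/2}|\tau-\sigma|\,(\sup_{t\in K}|t|)\,\|v_m\|_{L^2(\R;X_1)}$, which is controlled since $X_0\hookrightarrow X_1$. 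Moreover for each fixed $\tau$, $\{\hat v_m(\tau)\}$ is bounded in $X_0$, hence relatively compact in $X_1$ (via $X_0\hookrightarrow X\hookrightarrow X_1$). By the vector-valued Arzelà–Ascoli theorem, a subsequence of $(\hat v_m)$ converges uniformly on $[-R,R]$ in $X_1$, hence in $L^2([-R,R];X_1)$; the limit must be $\hat v$ by uniqueness of the weak limit. Combining: choose $\eta$ small, then $R$ large, then pass to the Arzelà–Ascoli subsequence, then let $m\to\infty$; the three contributions are each $\lesssim\varepsilon$, proving $v_m\to v$ in $L^2(\R;X)$ along a subsequence. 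A standard subsequence-of-subsequence argument upgrades this to the asserted compactness of the embedding.
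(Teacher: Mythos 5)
The paper itself offers no proof of Lemma \ref{Le7}; it is quoted verbatim from Theorem 2.2 of \cite{[9]}, so your attempt can only be judged against the standard argument there. Your overall architecture (Plancherel, a frequency splitting at $|\tau|=R$, the Ehrling inequality, and the use of the compact time-support to get pointwise compactness of $\hat v_m(\tau)$) is the right one, and your step (4) is sound --- if heavier than necessary, since pointwise weak convergence $\hat v_m(\tau)\rightharpoonup \hat v(\tau)$ in $X_0$ plus the compact embedding into $X_1$ and dominated convergence already give $\hat v_m\to\hat v$ in $L^2([-R,R];X_1)$ without Arzel\`a--Ascoli.

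There is, however, a genuine error in step (3). You claim $\|\hat v_m(\tau)-\hat v(\tau)\|_X\le C\|\hat v_m(\tau)-\hat v(\tau)\|_{X_1}$ ``from $X\hookrightarrow X_1$''; this is backwards. The embedding $X\hookrightarrow X_1$ means $X\subset X_1$ with $\|u\|_{X_1}\le C\|u\|_{X}$, i.e.\ the $X_1$ norm is the \emph{weaker} one, so the $X$-norm of the high-frequency tail cannot be controlled by the $X_1$-weighted bound $\int|\tau|^{2\nu}\|\hat v_m\|_{X_1}^2\,d\tau\le M^2$ alone. The repair is to apply the Ehrling inequality $\|u\|_X\le\eta\|u\|_{X_0}+C_\eta\|u\|_{X_1}$ globally (on the high frequencies as well as the low ones), so that
$$
\int_{|\tau|>R}\|\hat v_m-\hat v\|_X^2\,d\tau\le 2\eta^2\int_{\R}\|\hat v_m-\hat v\|_{X_0}^2\,d\tau+2C_\eta^2 R^{-2\nu}\int_{|\tau|>R}|\tau|^{2\nu}\|\hat v_m-\hat v\|_{X_1}^2\,d\tau ,
$$
where the first term is $\le 8\eta^2M^2$ uniformly in $m$ and $R$, and the second is $\le 8C_\eta^2M^2R^{-2\nu}$. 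The order of quantifiers then matters: fix $\eta$ first, then choose $R$ large depending on $C_\eta$, then pass to the limit in $m$ on $[-R,R]$. Equivalently, and more in the spirit of Temam, apply Ehrling once to reduce the whole problem to strong convergence in $L^2(\R;X_1)$ and perform the frequency splitting entirely in the $X_1$ norm, where the weighted bound applies directly. With that correction your argument closes; the final diagonal extraction over $R$ should also be made explicit so the subsequence does not depend on $R$.
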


\section{Existence of the weak solution}

This section discusses the existence of the weak solution defined by
Definition \dref{De1} to Equation \dref{1}. The main idea is to
construct to a Galerkin approximation.

 Choose two orthogonal bases $\{u_j\}_{j=1}^\infty $ for $ V$ and $\{\mu_j\}_{j=1}^\infty$ for
$W$ respectively.  Construct the Galerkin approximation solutions
\begin{equation}\label{guobz}
 z_m(x,t)=\sum_{j=1}^m q_{im}(t)u_j(x), \;
w_m(x,t)=\sum_{j=1}^m h_{im}(t)\mu_j(x)
\end{equation}
 such that for all $j\in
\N^+$, $\{z_m,w_m\}$ satisfies
\begin{equation}\label{20}
\left\{\begin{array}{l}
(z_m^\prime(t),u_j)+a_{\gamma(w_m)}(z_m(t),u_j)+b(z_m(t),z_m(t),u_j)+\beta
(w_m(t)g,u_j)=\langle f_1(t),u_j\rangle\\\hspace{11.5cm}+\langle
v_1,u_{j_n}\rangle_{\Gamma_1}, \\
(w_m^\prime(t),\mu_j)+a_{k(w_m)}(w_m(t),\mu_j)+c(z_m(t),w_m(t),\mu_j)
=\langle f_2(t),\mu_j\rangle+\langle v_2,\mu_j\rangle_{\Gamma_2},
\\z_m(0)=z_{m0}\to z_0 \hbox{ in }H, w_m(0)=w_{m0}\to w_0 \hbox{ in
}\tilde{H}, j=1,2\cdots,m.
\end{array}\right.
\end{equation}
where $z_{m0}$ is, for example, the orthogonal projection in $H$ of $z_0$ on the space spanned by ${u_1,u_2,\cdots,u_m}$ and $w_{m0}$ is the orthogonal projection in $\tilde{H}$ of $w_0$ on the space spanned by ${\mu_1,\mu_2,\cdots,\mu_m}$.

Hence once again, we write $z_m(t)=z_m(\cdot,t),
w_m(t)=w_m(\cdot,t)$, $f_i(t)=f_i(\cdot,t), v_i(t)=v_i(\cdot,t),
i=1,2$ by abuse of notation  without the confusion from the context.
It is seen that for any $m\in \N^+$, system \dref{20} is a system of
nonlinear differential equations with  the unknown variables
$\{q_{jm}(t),h_{jm}(t)\}$ and  the initial values
$q_{jm}(0)=(z_0,u_j), h_{jm}(0)=(w_0,\mu_j)$, $j=1,2,\cdots,m$. By
the assumption, this initial value problem admits a solution in some
interval $[0,t_m]$. We need to show that $t_m=T$.
\begin{lemma}\label{Le2.1} Let $\{z_m,w_m\}$ be the sequence
satisfying \dref{20}. Then
 there
exists a subsequence of $\{z_m,w_m\}$, still denoted by itself
without confusion, such that
\begin{equation}\label{32}
z_m \to z \hbox{ weakly in }L^2(0,T;V) \hbox{ and } z_m\to z \hbox{
weakly star in } L^\infty(0,\infty;H),
\end{equation}
where $z\in L^2(0,T;V)\cap L^\infty(0,T;H)$,  and
\begin{equation}\label{34}
w_m \to w \hbox{ weakly in }L^2(0,T;W) \hbox{ and } w_m\to w \hbox{
weakly star in } L^\infty(0,\infty;\tilde{H}),
\end{equation}
where $w\in L^2(0,T;W)\cap L^\infty(0,T;\tilde{H})$.
\end{lemma}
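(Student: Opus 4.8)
The plan is to derive uniform-in-$m$ a priori estimates for the Galerkin solutions $\{z_m,w_m\}$ by testing the finite-dimensional system \dref{20} against the solution itself, and then extract weakly/weak-star convergent subsequences using the Banach–Alaoglu theorem. This simultaneously shows $t_m=T$, i.e. that the solutions are global.

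\textbf{Step 1: Energy estimate for the temperature.} First I would multiply the second equation of \dref{20} by $h_{jm}(t)$ and sum over $j=1,\dots,m$, so that the test function becomes $w_m(t)$ itself. Using $c(z_m,w_m,w_m)=0$ from Lemma \ref{Le3}(i), the convective term drops out, leaving
\begin{equation*}
\frac12\frac{d}{dt}|w_m(t)|^2+a_{k(w_m)}(w_m(t),w_m(t))=\langle f_2(t),w_m(t)\rangle+\langle v_2(t),w_m(t)\rangle_{\Gamma_2}.
\end{equation*}
By the coercivity in Lemma \ref{Le1}, $a_{k(w_m)}(w_m,w_m)\ge k_0 c_1'\|w_m\|_1^2$. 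The right-hand side is bounded, via the continuity of the trace operator $W\hookrightarrow L^2(\Gamma_2)$ and Young's inequality, by $\tfrac{k_0c_1'}{2}\|w_m\|_1^2+C(\|f_2(t)\|_{W^*}^2+\|v_2(t)\|_{H^{-1/2}(\Gamma_2)}^2)$. Absorbing the gradient term on the left and integrating in $t$ together with $|w_{m0}|\le|w_0|$ yields a bound for $w_m$ in $L^\infty(0,T;\tilde H)\cap L^2(0,T;W)$ uniform in $m$, using the hypotheses $f_2\in L^2(0,T;W^*)$, $v_2\in L^2(0,T;(H^{-1/2}(\Gamma_2)))$.

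\textbf{Step 2: Energy estimate for the velocity.} Next I would multiply the first equation of \dref{20} by $q_{jm}(t)$ and sum, giving $z_m(t)$ as test function. Again $b(z_m,z_m,z_m)=0$ by Lemma \ref{Le2}(i), so
\begin{equation*}
\frac12\frac{d}{dt}|z_m(t)|^2+a_{\gamma(w_m)}(z_m(t),z_m(t))+\beta(w_m(t)g,z_m(t))=\langle f_1(t),z_m(t)\rangle+\langle v_1(t),z_{m,n}(t)\rangle_{\Gamma_1}.
\end{equation*}
Coercivity gives $a_{\gamma(w_m)}(z_m,z_m)\ge\gamma_0c_1\|z_m\|_1^2$. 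The coupling term is estimated by $|\beta(w_m g,z_m)|\le\beta\|g\|_{L^\infty}|w_m||z_m|\le C|w_m|^2+\tfrac{\gamma_0c_1}{4}\|z_m\|_1^2$, and the boundary and forcing terms as before. Integrating and inserting the already-established bound on $|w_m(t)|$ from Step 1 closes the estimate and furnishes a bound for $z_m$ in $L^\infty(0,T;H)\cap L^2(0,T;V)$ uniform in $m$.

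\textbf{Step 3: Extraction of subsequences.} Since $L^2(0,T;V)$ is a reflexive Banach space and $L^\infty(0,T;H)=(L^1(0,T;H))^*$, the uniform bounds from Step 2 let me extract, by Banach–Alaoglu and a diagonal argument, a subsequence (not relabelled) with $z_m\to z$ weakly in $L^2(0,T;V)$ and weak-star in $L^\infty(0,T;H)$, which gives \dref{32}; the limit $z$ lies in $L^2(0,T;V)\cap L^\infty(0,T;H)$ since both are closed under the respective weak topologies. The same reasoning applied to the bounds from Step 1 yields \dref{34} with $w\in L^2(0,T;W)\cap L^\infty(0,T;\tilde H)$, possibly passing to a further subsequence. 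I would note that the interval of existence $[0,t_m]$ extends to $[0,T]$ precisely because the a priori bound prevents blow-up of $|z_m(t)|^2+|w_m(t)|^2$.

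The main obstacle is the temperature dependence of the coefficients $\gamma(w_m)$ and $k(w_m)$ inside the bilinear forms: one must be careful that the coercivity and continuity constants in Lemmas \ref{Le1}--\ref{Le3} are independent of $w_m$, which is exactly guaranteed by the two-sided bounds \dref{Assum}. A secondary point requiring care is the boundary term $\langle v_1,z_{m,n}\rangle_{\Gamma_1}$: one needs the normal trace on $\Gamma_1$ to be controlled by $\|z_m\|_V$, which follows from the trace theorem, but the pairing with $v_1\in H^{-1/2}(\Gamma_1)$ must be handled through the $H^{1/2}$--$H^{-1/2}$ duality rather than an $L^2$ inner product. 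These are the places where the nonstandard pressure boundary condition makes this problem genuinely harder than the classical Dirichlet case.
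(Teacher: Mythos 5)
Your proposal follows essentially the same route as the paper: test the Galerkin system with $z_m$ and $w_m$ themselves, kill the trilinear terms via Lemma \ref{Le2}(i) and Lemma \ref{Le3}(i), use the coercivity from \dref{Assum} together with the trace theorem and Young's inequality to absorb the boundary and coupling terms, close the velocity estimate with the previously obtained temperature bound, and extract weak/weak-star limits from the resulting uniform bounds. The only cosmetic difference is that you control the coupling term $\beta(w_m g,z_m)$ through the $L^\infty(0,T;\tilde H)$ bound on $w_m$ while the paper routes it through the $L^2(0,T;W)$ bound; both close the estimate, so the argument is correct and matches the paper's proof.
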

\begin{proof}
 By Lemmas \ref{Le2} \ref{Le3}, we
have
\begin{equation}\label{24}
 \left\{\begin{array}{l}
(z_m^\prime(t),z_m(t))+a_{\gamma(w_m)}(z_m(t),z_m(t))+\beta
(w_m(t)g,z_m(t))=\langle f_1(t),z_m(t)\rangle\\ \hspace{9.5cm}
+\langle
v_1(t),z_{m_n}(t)\rangle_{\Gamma_1}, \\
(w_m^\prime(t),w_m(t))+a_{k(w_m)}(w_m(t),w_m(t)) =\langle
f_2(t),w_m(t)\rangle+\langle v_2(t),w_m(t)\rangle_{\Gamma_2}.
\end{array}\right.
\end{equation}
 By assumption \dref{Assum}, for any given $\varepsilon>0$, we can
get from \dref{24} that
\begin{equation}\label{25g1}
 \begin{array}{ll}
\disp\dfrac{d}{dt}|z_m(t)|^2+2\gamma_0\|z_m(t)\|^2 & \le\disp
-2\beta (w_m(t)g,z_m(t))+2\langle f_1(t),z_m(t)\rangle\crr &+\disp
2\langle v_1(t),z_{m_n}(t)\rangle_{\Gamma_1}\le \gamma_0
\|z_m(t)\|^2\crr &+\disp
\dfrac{1}{\gamma_0}\|f_1(t)\|^2_{V^*}+\beta\|g\|_{\infty}\left(\dfrac{1}{\varepsilon}\|w_m(t)\|^2+
\varepsilon^2\|z_{m_n}(t)\|^2\right)\crr &+\disp
\varepsilon^2\|z_{m_n}(t)\|^2_{H^{1/2}(\Gamma_1)}
+\dfrac{1}{\varepsilon^2}\|v_1(t)\|^2_{H^{-1/2}(\Gamma_1)}.
\end{array}
\end{equation}
Here and hereafter, ${\|z_nm(t)\|_{[H^{1/2}(\Gamma_1)]^N}^2}$ we denote by ${\|z_nm(t)\|_{H^{1/2}(\Gamma_1]}^2}$ and
${\|v_1(t)\|_{[H^{-1/2}(\Gamma_1)]^N}^2}$
by ${\|v_1(t)\|_{H^{-1/2}(\Gamma_1)}^2}$  simply.

By the trace theorem from $H^1(\Omega)$ to $H^{1/2}(\Gamma)$, there
exists a constant $c_{10}>0$ such that
$$
\|z_{m_n}(t)\|_{H^{1/2}(\Gamma_1)}\le c_{10}\|z_{m}(t)\|.
$$
By substituting above inequality into \dref{25g1}, we obtain such as;
\begin{equation}\label{25g2}
 \begin{array}{l}
\disp\dfrac{d}{dt}|z_m(t)|^2+[\gamma_0-(\beta\|g\|_{\infty}\varepsilon^2+c_{10}\varepsilon^2)]\|z_m(t)\|^2\crr
\disp \hspace{1cm}  \le
\dfrac{1}{\gamma_0}\|f_1(t)\|^2_{V^*}+\beta\|g\|_{\infty}
\dfrac{1}{\varepsilon^2}\|w_m(t)\|^2
 +\dfrac{1}{\varepsilon^2}\|v_1(t)\|^2_{H^{-1/2}(\Gamma_1)}.
\end{array}
\end{equation}
Setting $\varepsilon^2=\gamma_0/(2(\beta\|g\|_{\infty}+c_{10})$ in
\dref{25g2} gives
\begin{equation}\label{25g3}
 \begin{array}{ll}
\disp\dfrac{d}{dt}|z_m(t)|^2+\dfrac{\gamma_0}{2}\|z_m(t)\|^2 &\le
\disp  \dfrac{1}{\gamma_0{c_1}}\|f_1(t)\|^2_{V^*} +\dfrac{2(\beta
\|g\|_{\infty}+c_{10})}{\gamma_0{c_1}}\beta \|g\|_{\infty}\|
\|w_m(t)\|^2\crr &+\disp \dfrac{2(\beta \|g\|_{\infty}+
c_{10})}{\gamma_0{c_1}} \|v_1(t)\|^2_{H^{-1/2}(\Gamma_1)}.
\end{array}
\end{equation}
By assumption \dref{Assum} again, for any given $\varepsilon>0$, we
can get from \dref{24} that
\begin{equation}\label{25g4}
 \begin{array}{ll}
\disp\dfrac{d}{dt}|w_m(t)|^2+2k_0\|w_m(t)\|^2  &\le \disp  k_0
\|w_m(t)\|^2+ \dfrac{1}{k_0}\|f_2(t)\|^2_{W^*}\crr &+\disp
\varepsilon^2\|w_{m}(t)\|^2 _{H^{1/2}(\Gamma_2)}
+\dfrac{1}{\varepsilon^2}\|v_2(t)\|^2_{H^{-1/2}(\Gamma_2)}.
\end{array}
\end{equation}
By the trace theorem from $H^1(\Omega)$ to $H^{1/2}(\Gamma)$ again,
there exists a constant $c_{11}>0$ such that
$$
\|w_{m_n}(t)\|_{H^{1/2}(\Gamma_2)}\le c_{11}\|w_m(t)\|.
$$
By substituting above inequality into \dref{25g4}, we obtain such as;
\begin{equation}\label{25g5}
\dfrac{d}{dt}|w_m(t)|^2+(k_0-c_{11}\varepsilon^2)\|w_m(t)\|^2
 \le \dfrac{1}{k_0}\|f_2(t)\|^2_{W^*}
 +\dfrac{1}{\varepsilon^2}\|v_2(t)\|^2_{H^{-1/2}(\Gamma_2)}.
\end{equation}
Setting $\varepsilon^2=k_0/(2c_{11})$ in \dref{25g5} gives
\begin{equation}\label{25g6}
\dfrac{d}{dt}|w_m(t)|^2+\dfrac{k_0}{2}\|w_m(t)\|^2
 \le \dfrac{1}{k_0}\|f_2(t)\|^2_{W^*}
 +\dfrac{2c_{11}}{k_0}\|v_2(t)\|^2_{H^{-1/2}(\Gamma_2)}.
\end{equation}
Integrate \dref{25g6} over $[0,T]$ with respect to $t$ to give
\begin{equation}\label{25g7}
\begin{array}{l}
\disp |w_m(T)|^2+\dfrac{k_0}{2}\int_0^T\|w_m(t)\|^2dt
 \le \dfrac{1}{k_0}\int_0^T\|f_2(t)\|^2_{W^*}dt
 +\dfrac{2c_{11}}{k_0}\int_0^T\|v_2(t)\|^2_{H^{-1/2}(\Gamma_2)}dt\crr\disp
 +|w_m(0)|^2
\disp  \le
 \dfrac{1}{k_0}\int_0^T\|f_2(t)\|^2_{W^*}dt
 +\dfrac{2c_{11}}{k_0}\int_0^T\|v_2(t)\|^2_{H^{-1/2}(\Gamma_2)}dt+|w(0)|^2.
 \end{array}
\end{equation}
Since the right-hand side of \dref{25g7} is bounded, we have
\begin{equation}\label{28}
\{w_m\} \hbox{ is a bounded sequence in }L^2(0,T;W).
\end{equation}
Replace $T$ by $t\in [0,T]$ in  \dref{25g7} to obtain
\begin{equation}\label{25g8}
ess\sup\limits_t|w_m(t)|^2 \le
 \dfrac{1}{k_0}\int_0^T\|f_2(t)\|^2_{W^*}dt
 +\dfrac{2c_{11}}{\varepsilon^2}\int_0^T\|v_2(t)\|^2_{H^{-1/2}(\Gamma_2)}dt+|w(0)|^2.
\end{equation}
Hence
\begin{equation}\label{29}
\{w_m\} \hbox{ is a bounded sequence in }L^\infty(0,T;L^2(\Omega)).
\end{equation}
On the other hand, integrate \dref{25g3} over $[0,T]$ with respect
to $t$  to give
\begin{equation}\label{25g10}
 \begin{array}{ll}
\disp|z_m(T)|^2+\dfrac{\gamma_0}{2}\int_0^T\|z_m(t)\|^2dt &\le \disp
\dfrac{1}{\gamma_0}\int_0^T\|f_1(t)\|^2_{V^*}dt \crr &+\disp
\dfrac{2(\beta \|g\|_{\infty}+c_{10})}{\gamma_0}\beta
\|g\|_{\infty}\| \int_0^T\|w_m(t)\|^2dt\crr &+\disp \dfrac{2(\beta
\|g\|_{\infty}+ c_{10})}{\gamma_0}
\int_0^T\|v_1(t)\|^2_{H^{-1/2}(\Gamma_1)}dt+|z_0|^2.
\end{array}
\end{equation}
Therefore
\begin{equation}\label{30}
\{z_m\} \hbox{ is a bounded sequence in }L^2(0,T;V).
\end{equation}
Replace $T$ by $t\in [0,T]$ in \dref{25g10} to get
\begin{equation}\label{25g11}
\begin{array}{ll}
 ess\sup\limits_t|z_m(t)|^2 &\le \disp
\dfrac{1}{\gamma_0}\int_0^T\|f_1(t)\|^2_{V^*}dt +\dfrac{2(\beta
\|g\|_{\infty}+c_{10})}{\gamma_0}\beta \|g\|_{\infty}\|
\int_0^T\|w_m(t)\|^2dt\crr &+\disp \dfrac{2(\beta \|g\|_{\infty}+
c_{10})}{\gamma_0}
\int_0^T\|v_1(t)\|^2_{H^{-1/2}(\Gamma_1)}dt+|z_0|^2.
\end{array}
\end{equation}
Therefore,
\begin{equation}\label{31}
\{z_m\} \hbox{ is a bounded sequence in }L^\infty(0,T;H).
\end{equation}
\dref{32} and \dref{34} then follow from  \dref{28} , \dref{30}
\dref{29} and \dref{31}.
\end{proof}

\begin{lemma}\label{Le2.2} Let $\{z_m,w_m\}$ be the sequence
determined by Lemma \ref{Le2.1}. Then there exists a sequence of
$\{z_m,w_m\}$, still denoted by itself without confusion, such that
\begin{equation}\label{45}
z_m \to z \hbox{ strongly in } L^2(0,T;H), \;w_m \to w \hbox{
strongly in } L^2(0,T;\tilde{H}).
\end{equation}
\end{lemma}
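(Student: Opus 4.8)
The plan is to upgrade the weak bounds of Lemma~\ref{Le2.1} to strong convergence by means of the fractional-in-time compactness theorem stated in Lemma~\ref{Le7}. The classical Aubin--Lions lemma is not directly usable, because by Lemma~\ref{Le6} the convective terms only give $B(z_m)\in L^1(0,T;V^*)$ and $C(z_m,w_m)\in L^1(0,T;W^*)$, so $z_m'$ and $w_m'$ are bounded merely in $L^1$ in time; Lemma~\ref{Le7} is made precisely for this case. First I would note that $V\hookrightarrow H\hookrightarrow V^*$ and $W\hookrightarrow\tilde{H}\hookrightarrow W^*$ are chains of Hilbert spaces whose first embedding is compact (Rellich--Kondrachov, as $\Omega$ is bounded with smooth boundary) and whose second is then compact by passing to adjoints, so Lemma~\ref{Le7} applies with $(X_0,X,X_1)=(V,H,V^*)$ and with $(X_0,X,X_1)=(W,\tilde{H},W^*)$.

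The core is a uniform fractional time estimate. I would extend $z_m$ by zero outside $[0,T]$, calling the extension $\tilde z_m$, so that $\mathrm{supp}\,\tilde z_m\subset[0,T]=:K$ and $\{\tilde z_m\}$ is bounded in $L^2(\R;V)$ by \dref{30}. On $(0,T)$, \dref{20} reads $z_m'=\ell_m-B(z_m)$ with $\ell_m:=F_1-A_\gamma z_m-\beta g\,w_m$ bounded in $L^2(0,T;V^*)$ (from \dref{Assum}, Lemma~\ref{Le2.1} and $g\in L^\infty(\Omega)$), while $B(z_m)$ is bounded in $L^1(0,T;V^*)$. Extending by zero produces two Dirac masses at the endpoints:
\[
\frac{d\tilde z_m}{dt}=\tilde\ell_m-\widetilde{B(z_m)}+z_m(0)\,\delta_0-z_m(T)\,\delta_T ,
\]
the tildes on the right denoting zero-extensions. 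Taking Fourier transforms, using that $|z_m(0)|_H,|z_m(T)|_H$ are bounded by \dref{31} and that the $L^1$ term $\widetilde{B(z_m)}$ has Fourier transform bounded in $V^*$ uniformly in $m$, one obtains
\[
2\pi|\tau|\,\bigl|\hat{\tilde z}_m(\tau)\bigr|_{V^*}\le\bigl|\hat{\tilde\ell}_m(\tau)\bigr|_{V^*}+C ,\qquad C\text{ independent of }m .
\]
Fixing any $\nu\in(0,\tfrac12)$ and splitting $\int_\R|\tau|^{2\nu}|\hat{\tilde z}_m(\tau)|_{V^*}^2\,d\tau$ at $|\tau|=1$, the low-frequency part is controlled by $\|\tilde z_m\|_{L^2(\R;V^*)}^2$ and the high-frequency part by $\|\hat{\tilde\ell}_m\|_{L^2(\R;V^*)}^2+C\int_{|\tau|>1}|\tau|^{2\nu-2}\,d\tau$, both bounded uniformly in $m$ since $2\nu-2<-1$. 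Hence $\{\tilde z_m\}$ is bounded in $H_K^\nu(\R;V,V^*)$.

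By Lemma~\ref{Le7}, the embedding $H_K^\nu(\R;V,V^*)\hookrightarrow L^2(\R;H)$ is compact, so along a subsequence $\tilde z_m\to\zeta$ strongly in $L^2(\R;H)$; since $\mathrm{supp}\,\tilde z_m\subset[0,T]$, this yields $z_m\to\zeta|_{(0,T)}$ strongly in $L^2(0,T;H)$, and comparison with the weak limit in \dref{32} forces $\zeta|_{(0,T)}=z$ --- the first half of \dref{45}. The argument for $w_m$ is identical: extend $w_m$ by zero, write $w_m'=\eta_m-C(z_m,w_m)$ with $\eta_m:=F_2-A_k w_m$ bounded in $L^2(0,T;W^*)$ by \dref{28} and $C(z_m,w_m)$ bounded in $L^1(0,T;W^*)$ by Lemma~\ref{Le6}, deduce a uniform bound in $H_K^\nu(\R;W,W^*)$, and apply Lemma~\ref{Le7} with $(W,\tilde{H},W^*)$ together with \dref{34} to get $w_m\to w$ strongly in $L^2(0,T;\tilde{H})$.

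The hard part will be the fractional time estimate: I must cleanly split off the ``good'' $L^2(0,T;V^*)$ part $\ell_m$ from the ``bad'' $L^1(0,T;V^*)$ part $B(z_m)$ (and likewise for $C(z_m,w_m)$), and I must absorb the two endpoint Dirac masses from the zero-extension, whose Fourier transforms are bounded constants in $H\hookrightarrow V^*$; these contributions are exactly what restricts the exponent to $\nu<\tfrac12$, though any such $\nu$ suffices. A subsidiary technicality is that \dref{20} only controls $z_m'$ and $w_m'$ in the finite-dimensional duals $V_m^*,W_m^*$; to get genuine $V^*$- and $W^*$-bounds one uses $z_m'(t)\in V_m$ and writes the $V^*$-functional $v\mapsto(z_m'(t),v)$ as $(P_m)^*\bigl(F_1-A_\gamma z_m-\beta g\,w_m-B(z_m)\bigr)$, $P_m$ being the projection onto $V_m$, which is clean provided the bases $\{u_j\}$, $\{\mu_j\}$ are taken adapted to the pairs $(H,V)$, $(\tilde{H},W)$ so that these projections are bounded by $1$ on $V$ and $W$ uniformly in $m$.
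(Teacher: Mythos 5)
Your proposal is correct and follows the same overall strategy as the paper: extend by zero, take Fourier transforms (picking up the endpoint Dirac masses $z_m(0)\delta_0-z_m(T)\delta_T$), establish a uniform fractional-derivative bound $\int_\R|\tau|^{2\nu}\|\hat z_m(\tau)\|_{*}^2\,d\tau\le C$, and conclude by the compactness of Lemma \ref{Le7}. The execution of the key estimate is genuinely different, though. The paper lumps the entire right-hand side $f_{1m}=F_1-\beta g w_m-A_\gamma z_m-B(z_m)$ into one family bounded in $L^1(0,T;V^*)$, so that $\sup_\tau\|\hat f_{1m}(\tau)\|_{V^*}$ is uniformly bounded; it then pairs the transformed equation with $\hat z_m(\tau)$ itself (multiplying by $\hat q_{jm}$ and summing over $j\le m$) to get the scalar inequality $|\tau|\,|\hat z_m(\tau)|^2\le c\|\hat z_m(\tau)\|$, and closes with the weight $|\tau|^{2\nu}\le c(\nu)(1+|\tau|)/(1+|\tau|^{1-2\nu})$ plus Cauchy--Schwarz, which forces $\nu<1/4$ and a bound in $H_K^\nu(\R;V,H)$. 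You instead separate the $L^2(0,T;V^*)$-bounded part $\ell_m$ from the $L^1$-bounded part $B(z_m)$, bound $|\tau|\,\|\hat z_m(\tau)\|_{V^*}$ linearly, and split at $|\tau|=1$, which gives every $\nu<\tfrac12$ and a bound in $H_K^\nu(\R;V,V^*)$. Your route is cleaner and yields the better exponent, but it pays a price the paper's device avoids: estimating the $V^*$-norm of $\tau\hat z_m$ requires testing against all of $V$, whereas \dref{20} is only tested against $u_1,\dots,u_m$; so your closing caveat about adapted bases and uniformly $V$-bounded projections is not cosmetic but a real hypothesis on the basis (met, e.g., by a spectral basis orthogonal in both $H$ and $V$), while the paper's pairing with $\hat z_m(\tau)\in\mathrm{span}\{u_1,\dots,u_m\}$ sidesteps it entirely. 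With that choice of basis made explicit, your argument is complete.
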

\begin{proof}
 By virtue of Lemmas
\ref{Le1}-\ref{Le3}, we can write \dref{20} as follows:
\begin{equation}\label{36}
\left\{\begin{array}{l} \disp  \left \langle
\dfrac{dz_m(t)}{dt},u_j\right\rangle =\langle F_1-\beta g
w_m(t)-A_{\gamma}z_m(t)-B(z_m(t),u_j\rangle, \forall\;
j=1,2,\cdots,m, \crr\disp \left\langle
\dfrac{dw_m(t)}{dt},\mu\right\rangle =\langle F_2- C(z_m(t),w_m(t))-A_k
w_m(t), \mu_j\rangle, \forall \; j=1,2,\cdots,m.
\end{array}\right.
\end{equation}
Denote by $\{\tilde{z}_m,\tilde{w}_m\}$ the $\{z_m,w_m\}$ with zero
values outside of $[0,T]$ and $\{\hat{z}_m,\hat{w}_m\}$ the
 Fourier transformations of $\{\tilde{z}_m,\tilde{w}_m\}$. We claim
 that there exists a $\nu>0$ such that
\begin{equation}\label{39}
\int_{-\infty}^\infty |\tau|^{2\nu}\|\hat{z}_m(\tau)\|^2 d\tau
<\infty.
\end{equation}
To this end, we write the first equation of \dref{36} as
\begin{equation}\label{41}
\dfrac{d}{dt}(\tilde{z}_m,u_j) =\langle
\tilde{f}_{1m},u_j\rangle+(z_{0m},u_j)\delta_0-(z_m(T),u_j)\delta_T,
\end{equation}
where $\delta_0,\delta_T$ are Dirac functions, and
$$
\begin{array}{l}
\disp \tilde{f}_{1m}(t) =f_{1m}(t) \hbox{ for }t\in [0,T] \hbox{ and
}\tilde{f}_{1m}(t) \hbox{ for }t>T, \crr\disp
 f_{1m}(t)=F_1-\beta
g w_m(t)-A_\gamma z_m(t)-B(z_m(t)).
\end{array}
$$
Take Fourier transform for Equation \dref{41} to get
\begin{equation}\label{42}
2\pi i\tau(\hat{z}_m,u_j) =\langle
\hat{f}_{1m},u_j\rangle+(z_{0m},u_j)-(z_m(T),u_j)e^{-2\pi i T\tau},
\end{equation}
where $\hat{f}_{1m}$ is the Fourier transform of $\tilde{f}_{1m}$.

  Let $\tilde{q}_{jm}(t)$ be the function of $q_{im}$ in $z_m(t)=\sum_{j=1}^m q_{jm}(t)u_j$ that is zero
  outside of $[0,T]$ and let $\hat{q}_{jm}(\tau)$ be its Fourier
  transform. Multiplier Equation \dref{42} by $\hat{q}_{jm}$ and sum
  for $j$ from 1 to $m$ to obtain
\begin{equation}\label{43}
2\pi i|\hat{z}_m(\tau)|^2=\langle
\hat{f}_{1m}(\tau),\hat{z}_m(\tau)\rangle+(z_{0m}(0),\hat{z}_m(\tau))-(z_m(T),\hat{z}_{m}(\tau))e^{-2\pi
i T\tau}.
\end{equation}
We thus conclude that
\begin{equation}\label{guo1}
\int_0^T\|f_{1m}(t)\|_{V^*}^2dt \le
\int_0^T\left[\|f_1(t)\|_{V^*}+c_2\|w_m(t)\|+\gamma_1\|z_m(t)\|+c_1\|z_{m}(t)\|^2\right]dt,
\end{equation}
where we used the fact that $\|Bz_m(t)\|\le c_1\|z_m(t)\|^2$. By
\dref{28}, \dref{29} and \dref{30}, it follows from \dref{guo1} that
\begin{equation}\label{guo2}
\sup_{\tau\in \R}\|\hat{f}_{1m}(\tau)\|_{V^*} <\infty.
\end{equation}
Apply \dref{guo2} and the facts $\sup_{m\in
\Z^+}[\|z_m(0)\|+\|z_m(T)\|]<\infty$ to \dref{43} to yield
\begin{equation}\label{44}
|\tau||\hat{z}_m(\tau)|^2 \le
c\acute{}_3\|\hat{z}_m(\tau)\|+c\acute{}_4|\hat{z}_m(\tau)|=c\acute{}_5\|\hat{z}_m(\tau)\|,
c\acute{}_5=c\acute{}_3+c\acute{}_4.
\end{equation}

For $\nu$ , fixed ,$\nu<1/4$ , we can observe that
$$
|\tau|^{2\nu} \le c'(\nu)\dfrac{1+|\tau|}{1+|\tau|^{1-2\nu}}, \forall\;
\tau \in \R
$$

From this inequality, we obtain
\begin{equation}\label{gbz1}
\int_{-\infty}^\infty |\tau|^{2\nu}|\hat{z}_m(\tau)|^2d\tau \le
c'(\nu)\int_{-\infty}^\infty
\dfrac{1+|\tau|}{1+|\tau|^{1-2\nu}}|\hat{z}_m(\tau)|^2d\tau
\end{equation}

Thus by \dref{44}, we can obtain the constants $c'_0>0$ and $c'_1>0$ such that
$$
\int_{-\infty}^\infty |\tau|^{2\nu}|\hat{z}_m(\tau)|^2d\tau \le
c'(\nu)\int_{-\infty}^\infty
\dfrac{1+|\tau|}{1+|\tau|^{1-2\nu}}|\hat{z}_m(\tau)|^2d\tau
\le
$$
$$
\le c'_0\int_{-\infty}^\infty
\dfrac{\|\hat{z}_m(\tau)\|}{1+|\tau|^{1-2\nu}}d\tau+c'_1\int_{-\infty}^\infty
\|\hat{z}_m(\tau)\|^2d\tau
$$
By \dref{45} and the Parseval equality the last integral is bounded as $m\to \infty$ ; thus \dref{39} will be proved if we show that;
$$
\int_{-\infty}^\infty
\dfrac{\|\hat{z}_m(\tau)\|}{1+|\tau|^{1-2\nu}}d\tau\leq{Const}
$$
By the Schwarz inequality and the Parseval equality we can obtain

$$
\int_{-\infty}^\infty
\dfrac{\|\hat{z}_m(\tau)\|}{1+|\tau|^{1-2\nu}}d\tau\leq{\le (\int_{-\infty}^\infty
\dfrac{d\tau}{(1+|\tau|^{1-2\nu})^2})^{1/2}(\int_{0}^T
\|\hat{z}_m(t)\|^2dt})^{1/2}
$$
which is finite since $\nu<1/4$ , and bounded as $m\to \infty$ ; by \dref{31}.
The proof of \dref{39} is achieved.
By \dref{39}, \dref{30} and \dref{31}, we conclude that
\begin{equation}\label{40}
\{z_m\} \hbox{ is bounded in } H^{\nu}(\R;V)\cap H^{\nu}(\R;H).
\end{equation}
By Lemma \ref{Le7}, there exists a subsequence of $\{z_m,w_m\}$ that
is still denoted by itself without confusion such that
 $$
z_m \to z \hbox{ strongly in } L^2(0,T;H), \;w_m \to w \hbox{
strongly in } L^2(0,T;L^2(\Omega)).
$$
This is \dref{45}.
\end{proof}

\begin{theorem}\label{Th1}
Suppose that the functions $\gamma,k$ satisfy that when $w_m\to w$
in $L^2(0,T;L^2(\Omega))$, then $\gamma(w_m)\to \gamma(w),
k(w_m)\to k(m)$ in $L^2(0,T;L^2(\Omega))$, respectively. Then there exists a weak solution to
\dref{1}.
\end{theorem}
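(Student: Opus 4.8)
The plan is to pass to the limit $m\to\infty$ in the Galerkin system \dref{20}, using the convergences already extracted in Lemmas \ref{Le2.1} and \ref{Le2.2}. Fix an index $j$ and a function $\chi\in C^1[0,T]$ with $\chi(T)=0$; multiply the $j$-th equations of \dref{20} by $\chi$, integrate over $(0,T)$, and integrate by parts in $t$, exactly as in the passage from \dref{7}--\dref{8} to \dref{9}--\dref{10}. This gives, for every $m\ge j$, an integrated weak identity for $\{z_m,w_m\}$ tested against $u_j\chi$ and $\mu_j\chi$, in which the only $t$-derivative falls on $\chi$ and the initial data appear as $(z_{m0},u_j)\chi(0)$ and $(w_{m0},\mu_j)\chi(0)$. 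I would then pass to the limit term by term.

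The linear terms are immediate: $(z_m,u_j\chi')\to(z,u_j\chi')$ and $\beta(gw_m,u_j\chi)\to\beta(gw,u_j\chi)$ in $L^1(0,T)$ from the strong $L^2$ convergence of Lemma \ref{Le2.2} (weak convergence would already suffice), the right-hand sides do not depend on $m$, and $(z_{m0},u_j)\chi(0)\to(z_0,u_j)\chi(0)$, $(w_{m0},\mu_j)\chi(0)\to(w_0,\mu_j)\chi(0)$ by the convergence of the projected initial data. For the viscosity form one invokes the hypothesis on $\gamma$: since $\gamma(w_m)\to\gamma(w)$ in $L^2(0,T;L^2(\Omega))$ and $|\gamma(w_m)|\le\gamma_1$ by \dref{Assum}, the fixed function ${\rm rot}(u_j)\,\chi(t)$ multiplied by $\gamma(w_m)$ converges \emph{strongly} in $L^2(Q)$ to $\gamma(w)\,{\rm rot}(u_j)\,\chi(t)$, while ${\rm rot}\,z_m\rightharpoonup{\rm rot}\,z$ weakly in $L^2(Q)$ by \dref{32} and boundedness of ${\rm rot}$; the weak--strong pairing then gives $\int_0^T a_{\gamma(w_m)}(z_m,u_j\chi)\,dt\to\int_0^T a_{\gamma(w)}(z,u_j\chi)\,dt$. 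The same argument with $k$ and \dref{34} handles $a_{k(w_m)}(w_m,\mu_j\chi)$. For the convection terms, Lemma \ref{Le2.2} gives $z_m\to z$ strongly in $L^2(0,T;H)$ and $w_m\to w$ strongly in $L^2(0,T;\tilde{H})$, together with $z_m\rightharpoonup z$ weakly in $L^2(0,T;V)$; passing to a further subsequence along which these hold for a.e.\ $t$ and using Lemmas \ref{Le2}(iii) and \ref{Le3}(iii) (with the uniform $L^2(0,T;V)$ bound supplying a dominating function) yields $\int_0^T b(z_m,z_m,u_j\chi)\,dt\to\int_0^T b(z,z,u_j\chi)\,dt$ and $\int_0^T c(z_m,w_m,\mu_j\chi)\,dt\to\int_0^T c(z,w,\mu_j\chi)\,dt$.

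Collecting these limits, the pair $(z,w)$ --- which lies in $Y$ by Lemmas \ref{Le2.1}--\ref{Le2.2} --- satisfies the integrated identity against every $u_j\chi$, hence, by linearity in the test-function slot, against every finite linear combination of the $u_j$, and by density and continuity of all terms on $V$ against every $\phi\in V$; likewise for $\varphi\in W$. Taking $\chi\in\D(0,T)$ first shows that the abstract equations \dref{16} hold in $\D(0,T)'$; since $A_\gamma z,\beta gw\in L^2(0,T;V^*)$ and $B(z)\in L^1(0,T;V^*)$ by Lemma \ref{Le6}, one reads off $z'\in L^1(0,T;V^*)$, and similarly $w'\in L^1(0,T;W^*)$. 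Consequently $z$ and $w$ admit representatives weakly continuous into $H$ and $L^2(\Omega)$, so $z(0),w(0)$ are well defined; choosing $\chi$ with $\chi(0)=1$, $\chi(T)=0$ and comparing the limiting identity with the one obtained by integrating \dref{16} against $u_j\chi$ forces $(z(0)-z_0,u_j)=0$ for all $j$, i.e.\ $z(0)=z_0$, and likewise $w(0)=w_0$. This verifies every item of Definition \ref{De1}.

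The only genuinely delicate step is the passage to the limit in the temperature-dependent diffusion forms $a_{\gamma(w_m)}(z_m,\cdot)$ and $a_{k(w_m)}(w_m,\cdot)$, which is precisely what the Nemytskii-type hypothesis on $\gamma$ and $k$ is designed for: the product $\gamma(w_m)\,{\rm rot}\,z_m$ couples a merely weakly convergent factor with a strongly convergent one, so one must keep the remaining factor fixed and exploit the uniform bound \dref{Assum} to upgrade $\gamma(w_m)\,{\rm rot}(u_j)\,\chi$ to strong $L^2(Q)$ convergence before taking the weak--strong limit. A minor point is that the trilinear terms must be treated in their time-integrated form, where Lemmas \ref{Le2}(iii) and \ref{Le3}(iii) apply along a subsequence with convergence a.e.\ in $Q$; the rest is routine weak and weak-$\ast$ compactness.
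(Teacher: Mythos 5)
Your proposal is correct and follows essentially the same route as the paper: multiply the $j$-th Galerkin equation by a scalar $C^1$ time factor vanishing at $T$, integrate by parts in $t$, pass to the limit using the weak/strong convergences of Lemmas \ref{Le2.1}--\ref{Le2.2}, handle $a_{\gamma(w_m)}$ and $a_{k(w_m)}$ by the weak--strong pairing of ${\rm rot}\,z_m$ against $\gamma(w_m)\,{\rm rot}(u_j)\chi$ (made strongly convergent via the Nemytskii hypothesis and the bound \dref{Assum}), treat $b$ and $c$ via Lemmas \ref{Le2}(iii), \ref{Le3}(iii), extend by density, and recover the initial data by comparing the integrated identities with $\chi(0)=1$. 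Your added remarks on $z'\in L^1(0,T;V^*)$ and the weak continuity needed to make $z(0)$ meaningful only make explicit what the paper delegates to Lemma \ref{Le6}.
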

\begin{proof}  Let $\Psi$ and $\theta$  be
continuous differentiable vector functions defined on $[0,T]$ with
$\Psi(T)=\theta(T)=0$. Multiply the first equation of \dref{20} by
$\Psi$ and integrate over $[0,T]$ with respect to $t$ to give
\begin{equation}\label{47}
\begin{array}{l}
\disp
-\int_0^T(z_m(t),\Psi'(t)u_j)dt+\int_0^T[a_{\gamma(w_m)}(z_m(t),\Psi(t)u_j)+b(z_m(t),z_m(t),\Psi(t)u_j)\crr\disp
+\beta (w_m(t)g,\Psi(t)u_j)]dt=(z_{0m},u_j)\Psi(0)+\int_0^T\langle
f_1(t),u_j\rangle dt+\int_0^T\langle
v_1,\Psi(t)u_{j_n}\rangle_{\Gamma_1}dt.
\end{array}
\end{equation}
 Multiply
the second  equation of \dref{20} by $\theta$ and integrate over
$[0,T]$ with respect to $t$ to give
\begin{equation}\label{48}
\begin{array}{l}
\disp
-\int_0^T(w_m(t),\theta'(t)\mu_j)dt+\int_0^T[a_{k(w_m)}(w_m(t),\theta(t)\mu_j)+c(z_m(t),w_m(t),\theta(t)\mu_j)\crr
\disp]dt =(w_{0m},\mu_j)\theta(0)+\int_0^T\langle
f_2(t),\theta(t)\mu_j\rangle dt+\int_0^T\langle
v_2,\theta(t)\mu_j\rangle_{\Gamma_2}dt
\end{array}
\end{equation}
Passing to the limit as $m\to \infty$ in \dref{47} and \dref{48} by
applying \dref{32}, \dref{45},  the properties (iii) in Lemmas
\ref{Le2} and \ref{Le3} for $b$ and $c$, and the continuous
assumption on $\gamma$ and $k$, we obtain
\begin{equation}\label{49}
\begin{array}{l}
\disp
-\int_0^T(z(t),\Psi'(t)u_j)dt+\int_0^T[a_{\gamma(w)}(z(t),\Psi(t)u_j)dt+b(z(t),z(t),\Psi(t)u_j)\crr\disp
+\beta (w(t)g,\Psi(t)u_j)]dt=(z_{0},u_j)\Psi(0)+\int_0^T\langle
f_1(t),u_j\rangle dt+\int_0^T\langle
v_1,\Psi(t)u_{j_n}\rangle_{\Gamma_1}dt.
\end{array}
\end{equation}
\begin{equation}\label{50}
\begin{array}{l}
\disp
-\int_0^T(w(t),\theta'(t)\mu_j)dt+\int_0^T[a_{k(w)}(w(t),\theta(t)\mu_j)+c(z(t),w(t),\theta(t)\mu_j)]dt\crr
\disp =(w_{0},\mu_j)\theta(0)+\int_0^T\langle
f_2(t),\theta(t)\mu_j\rangle dt+\int_0^T\langle
v_2,\theta(t)\mu_j\rangle_{\Gamma_2}dt,
\end{array}
\end{equation}
where in obtaining \dref{49} and \dref{50}, we used the following
facts:
\begin{itemize}

\item The convergence of the nonlinear terms in $b(\cdot,\cdot,\cdot)$ and
$c(\cdot,\cdot,\cdot)$ can be obtained  in the same way as that in
Chapter 3 of \cite{[9]}.

\item
$$
\begin{array}{ll}
\disp \int_0^Ta_{\gamma(w_m)}(z_m(t),\Psi(t)\mu_j)dt &= \disp
\int_0^T(\gamma(w_m)rot z_m(t),\Psi(t) rot u_j)dt\crr & =\disp
\int_0^T(rot z_m(t),\gamma(w_m)rot \Psi(t) rot u_j)dt\crr
&\to \disp \int_0^T(rot z(t),\gamma(w)\Psi(t) rot
u_j)dt=\int_0^T(\gamma(w)rot z(t),\Psi(t) rot u_j)dt \crr &=
\disp \int_0^Ta_{\gamma(w)}(z(t),\Psi(t)\mu_j)dt,
\end{array}
$$
where we used the facts that $rot z_m\to rot z$ weakly in
$L^2(0,T;V)$ and $\gamma(w_m)\Psi(t)rot u_j\to
\gamma(w)\Psi(t)\nabla u_j$ strongly in $L^2(0,T;H)$
from the assumptions of Theorem \ref{Th1}.

\item Similarly
$$
 \int_0^Ta_{k(w_m)}(w_m(t),\theta(t)\mu_j)dt
\to \int_0^Ta_{\gamma(w)}(w(t),\theta(t)\mu_j)dt
$$
by the facts  again $\nabla w_m\to \nabla w$ weakly in $L^2(0,T;W)$
and $k(w_m)\theta(t)\nabla \mu_j\to k(w)\theta(t)\nabla \mu_j$ strongly
in $L^2(0,T;\tilde{H})$ from the assumptions of Theorem \ref{Th1}.
\end{itemize}

By the density arguments, we have that \dref{49} and \dref{50} hold
true for any $\phi\in V$ instead of $u_j$  and $\varphi \in W$
instead on $\mu_j$, respectively. That is,
\begin{equation}\label{51}
\begin{array}{l}
\disp
-\int_0^T(z(t),\Psi'(t)\phi)dt+\int_0^T[a_{\gamma(w)}(z(t),\Psi(t)\phi)+b(z(t),z(t),\Psi(t)\phi)\crr\disp
+\beta (w(t)g,\Psi(t)\phi)]dt=(z_{0},\phi)\Psi(0)+\int_0^T\langle
f_1(t),\phi\rangle dt+\int_0^T\langle
v_1,\Psi(t)\phi_n\rangle_{\Gamma_1}dt, \forall\; \phi\in V;
\end{array}
\end{equation}
\begin{equation}\label{52}
\begin{array}{l}
\disp
-\int_0^T(w(t),\theta'(t)\varphi)dt+\int_0^T[a_{k(w)}(w(t),\theta(t)\varphi)+c(z(t),w(t),\theta(t)\varphi)]dt\crr
\disp =(w_{0},\varphi)\theta(0)+\int_0^T\langle
f_2(t),\theta(t)\varphi\rangle dt+\int_0^T\langle
v_2,\theta(t)\varphi\rangle_{\Gamma_2}dt, \forall\; \varphi \in W.
\end{array}
\end{equation}
Now take $\Psi\in (D(0,T))^N$ in \dref{51} and $\theta\in D(0,T)$ in
\dref{52}. Then $\{z,w\}$ satisfies
\begin{equation}\label{gbz5}
\left\{\begin{array}{l}
(z',\phi)+a_{\gamma(w)}(z,\phi)+b(z,z,\phi)+(\beta w g,\phi)=\langle
f_1,\phi\rangle+\langle v_1,\phi_n\rangle_{\Gamma_1}, \forall\;
\phi\in
V, \\
(w',\varphi)+a_{k(w)}(w,\varphi)+c(z,w,\varphi) =\langle
f_2,\varphi\rangle+\langle v_2,\varphi\rangle_{\Gamma_2}, \forall\;
\varphi\in W.
\end{array}\right.
\end{equation}
This is the equations in \dref{11}. Finally, we determine the
initial value of $\{z,w\}$. Actually, multiply the first equation of
\dref{gbz5} and integrate over $[0,T]$ with respect to $t$ to get
\begin{equation}\label{53}
\begin{array}{l}
\disp
-\int_0^T(z(t),\Psi'(t)\phi)dt+\int_0^T[a_{\gamma(w)}(z(t),\Psi(t)\phi)dt+b(z(t),z(t),\Psi(t)\phi)\crr\disp
+\beta (w(t)g,\Psi(t)\phi)]dt=(z(0),\phi)\Psi(0)+\int_0^T\langle
f_1(t),\phi\rangle dt+\int_0^T\langle
v_1,\Psi(t)\phi_n\rangle_{\Gamma_1}dt, \forall\; \phi\in V.
\end{array}
\end{equation}
Subtract \dref{53} from \dref{51} to get $(z(0)-z_0,\phi)\Psi(0)=0$.
Take $\Psi$ so that $\Psi(0)=1$ we get $(z(0)-z_0,\phi)=0$ for all
$\phi\in V$. So $z(0)=z_0$. The similar arguments lead to
$w(0)=w_0$. The proof is complete.
\end{proof}

\section{Uniqueness of the  weak solution}

\begin{theorem}\label{Th.2}  Let  $\gamma, k: L^2(\Omega)\to L^\infty(\Omega)$ are  Lipschitz
continuous functions, that is, there are constants $h_1, h_2>0$ such
that
\begin{equation}\label{54}
\begin{array}{l}
\|\gamma(w_*)-\gamma(w_{**})\|_{L^\infty}\le l_1\|w_*)-w_{**}\|, t\in
[0,T] \hbox{ a.e.},  \forall \; w_*, w_{**}\in L^2(\Omega), \crr
\|k(w_*)-k(w_{**})\|_{L^\infty}\le l_2\|w_*)-w_{**}\|,  t\in [0,T]
\hbox{ a.e.},  \forall \; w_*, w_{**}\in  L^2(\Omega).
\end{array}
\end{equation}
Suppose that the weak solution $\{z,w\}$ of \dref{1} claimed by
Theorem \ref{Th1} satisfies
\begin{equation}\label{55}
4d\left(\dfrac{\|z(t)\|_{L^4}}{c_1}+\dfrac{d}{k_0c_1\acute{c}_1}\|w(t)\|^2_{L^4}\right)<\gamma_0,
\forall\; t\in [0,T] \hbox{ a.e.},
\end{equation}
where  $d>0$ is the constant in Sobolev inequality that
$\|f\|_{L^4(\Omega)}\le d\|f\|_{H^1(\Omega)}$ for all $f\in
H^1(\Omega)$, which depends on $N$ and $\Omega$, and the constants
$c_1$ and $\acute{c}_1$ are that in  Lemma \ref{Le1}.
Then the weak solution is unique.
\end{theorem}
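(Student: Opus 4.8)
The proof proceeds by the energy method together with Gronwall's inequality. Let $\{z,w\}$ denote the weak solution of \dref{1} furnished by Theorem \ref{Th1}, so that it satisfies \dref{55}, and let $\{\bar z,\bar w\}$ be \emph{any} weak solution of \dref{1} in the sense of Definition \ref{De1} with the same data. Put $Z=z-\bar z$, $S=w-\bar w$; then $Z(0)=S(0)=0$ by the initial conditions. Writing \dref{gbz5} for $\{z,w\}$ and for $\{\bar z,\bar w\}$ and subtracting, one tests the resulting velocity equation with $\phi=Z(t)\in V$ and the resulting temperature equation with $\varphi=S(t)\in W$, which is legitimate after the customary time-mollification, so that $\langle Z',Z\rangle=\frac12\frac{d}{dt}|Z|^2$ and $\langle S',S\rangle=\frac12\frac{d}{dt}|S|^2$. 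Adding the two identities yields a single differential inequality for $|Z(t)|^2+|S(t)|^2$.

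The algebraic core is the bilinearity of $a_\gamma,a_k$, the trilinearity of $b,c$, and the cancellations $b(\bar z,Z,Z)=0$, $c(\bar z,S,S)=0$ of Lemmas \ref{Le2}(i), \ref{Le3}(i):
$$
\begin{array}{l}
a_{\gamma(w)}(z,Z)-a_{\gamma(\bar w)}(\bar z,Z)=a_{\gamma(w)}(Z,Z)+\big((\gamma(w)-\gamma(\bar w)){\rm rot}\,\bar z,\,{\rm rot}\,Z\big),\\[1mm]
b(z,z,Z)-b(\bar z,\bar z,Z)=b(Z,z,Z)=-b(Z,Z,z),\\[1mm]
\beta(wg,Z)-\beta(\bar w g,Z)=\beta(Sg,Z),\\[1mm]
a_{k(w)}(w,S)-a_{k(\bar w)}(\bar w,S)=a_{k(w)}(S,S)+\big((k(w)-k(\bar w))\nabla\bar w,\,\nabla S\big),\\[1mm]
c(z,w,S)-c(\bar z,\bar w,S)=c(Z,w,S)=-c(Z,S,w).
\end{array}
$$
By Lemma \ref{Le1}, $a_{\gamma(w)}(Z,Z)\ge\gamma_0c_1\|Z\|^2$ and $a_{k(w)}(S,S)\ge k_0\acute{c}_1\|S\|^2$. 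The Sobolev inequality $\|f\|_{L^4}\le d\|f\|_{H^1}$ (with Lemmas \ref{Le2}, \ref{Le3}) gives $|b(Z,Z,z)|\le d\|z\|_{L^4}\|Z\|^2$ and $|c(Z,S,w)|\le d\|w\|_{L^4}\|Z\|\,\|S\|$, while the Lipschitz hypothesis \dref{54} gives $|((\gamma(w)-\gamma(\bar w)){\rm rot}\,\bar z,{\rm rot}\,Z)|\le l_1|S|\,\|\bar z\|\,\|Z\|$ and $|((k(w)-k(\bar w))\nabla\bar w,\nabla S)|\le l_2|S|\,\|\bar w\|\,\|S\|$, and clearly $|\beta(Sg,Z)|\le\beta\|g\|_\infty|S|\,|Z|$.

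Substituting these bounds and using Young's inequality — splitting $d\|w\|_{L^4}\|Z\|\,\|S\|\le\frac{d\|w\|_{L^4}}{2}(\lambda\|Z\|^2+\lambda^{-1}\|S\|^2)$ with weight $\lambda=d\|w\|_{L^4}/(k_0\acute{c}_1)$, and absorbing each Lipschitz and coupling error term as a small multiple of $\|Z\|^2$ or $\|S\|^2$ plus a multiple of $|Z|^2$ or $|S|^2$ whose coefficient lies in $L^1(0,T)$ (here one uses $\|\bar z\|^2,\|\bar w\|^2\in L^1(0,T)$ since $\bar z\in L^2(0,T;V)$, $\bar w\in L^2(0,T;W)$) — the coefficient of $\|Z\|^2$ remaining on the left is $\gamma_0c_1-d\|z\|_{L^4}-d^2\|w\|^2_{L^4}/(2k_0\acute{c}_1)-\varepsilon$, which is strictly positive a.e.: indeed \dref{55} rearranges exactly to $d\|z(t)\|_{L^4}+d^2\|w(t)\|^2_{L^4}/(k_0\acute{c}_1)<\gamma_0c_1/4$, leaving ample room; likewise the coefficient of $\|S\|^2$ stays a positive fraction of $k_0\acute{c}_1$. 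Dropping these nonnegative terms, one is left with
$$
\frac{d}{dt}\big(|Z(t)|^2+|S(t)|^2\big)\le\kappa(t)\big(|Z(t)|^2+|S(t)|^2\big),\qquad \kappa\in L^1(0,T),
$$
and $|Z(0)|^2+|S(0)|^2=0$, whence Gronwall's inequality forces $Z\equiv0$, $S\equiv0$, i.e.\ $z=\bar z$ and $w=\bar w$.

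The step I expect to be the main obstacle is precisely this constant bookkeeping: one must apportion the two coercivity reserves $\gamma_0c_1\|Z\|^2$ and $k_0\acute{c}_1\|S\|^2$ among the convective term $b(Z,Z,z)$, the two pieces of the coupled term $c(Z,S,w)$ produced by the weighted Young inequality, and the Lipschitz error terms, and then verify that \dref{55} is exactly the condition ensuring a strictly positive remainder — the choice of the Young weight $\lambda$ balancing $c(Z,S,w)$ between the velocity and temperature estimates being the delicate point. A secondary technical issue is the justification of $\frac{d}{dt}|Z|^2=2\langle Z',Z\rangle$ with only $Z'\in L^1(0,T;V^*)$ available a priori, which is handled by the usual mollification-in-time argument.
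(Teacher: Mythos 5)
Your proposal is correct and follows essentially the same route as the paper: subtract the two weak formulations, test with the differences $Z$ and $S$, use the cancellations of Lemmas \ref{Le2}(i) and \ref{Le3}(i), the coercivity of Lemma \ref{Le1}, the Lipschitz hypothesis \dref{54}, the Sobolev constant $d$ with a weighted Young inequality to split $c(Z,S,w)$, absorb via \dref{55}, and close with Gronwall from zero initial data. The constant bookkeeping you flag as the delicate point is indeed exactly what the paper's computation \dref{add6}--\dref{add7} carries out, with the same Young weight (the paper's $\varepsilon_1^2=k_0\acute{c}_1/2d$ corresponds to your $\lambda$).
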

\begin{proof} Suppose that we have two weak solutions
$\{z_*,w_{*}\}$, $\{z_{*},z_{**}\}$ to \dref{1}. Set $z=z_*-z_{**},
w=w_*-w_{**}$. Then by \dref{11}
\begin{equation}\label{add1}
\left\{\begin{array}{ll} \disp \dfrac{d}{dt} (z,\phi)&+\disp
(\gamma(w_*)\nabla z,\nabla\phi)+b(z_*,z_*,\phi)
-b(z_{**},z_{**},\phi) +(\beta w g,\phi)\crr &=\disp
((\gamma(w_*)-\gamma(w_{**}))\nabla z_{**},\nabla\phi), \forall\;
\phi\in V, \crr \dfrac{d}{dt} (w,\varphi)&+\disp (k(w_*)\nabla
w,\nabla\varphi)+c(z_*,w_*,\varphi) -c(z_{**},w_{**},\varphi)\crr
&=\disp -((k(w_*)-k(w_{**}))\nabla w_{**},\nabla\varphi), \forall\;
\varphi\in W.
\end{array}\right.
\end{equation}
Set $\phi=z, \varphi =w$ in \dref{add1} to get
\begin{equation}\label{add2}
\left\{\begin{array}{ll} \disp \dfrac{d}{dt} (z,z)&+\disp
(\gamma(w_*)\nabla z,\nabla z)+b(z_*,z_*,z) -b(z_{**},z_{**},z)
+(\beta w g,z)\crr &=(\disp ((\gamma(w_*)-\gamma(w_{**}))\nabla
z_{**},\nabla\phi), \crr \dfrac{d}{dt} (w,w)&+\disp (k(w_*)\nabla
w,\nabla w)+c(z_*,w_*,w) -c(z_{**},w_{**},w)\crr &=\disp
-((k(w_*)-k(w_{**}))\nabla w_{**},\nabla w).
\end{array}\right.
\end{equation}
This together with $b(z{**},z,z)=c(z_{**},w,w)=0$ gives
\begin{equation}\label{add3}
\left\{\begin{array}{l} \disp \dfrac{d}{dt}\|z(t)\|^2+
2\gamma_0\|\nabla z(t)\|^2 \le  2b(z,z,z_*)+2(\beta g
w,z)-2((\gamma(w_*)-\gamma(w_{**}))\nabla z_{**},\nabla z), \crr
\disp  \dfrac{d}{dt} \|w(t)\|^2+  2k_0\|\nabla w(t)\|^2 \le
2c(z,w,w_*) +2((k(w_*)-k(w_{**}))\nabla w_{**},\nabla w).
\end{array}\right.
\end{equation}
By the Lipschitz continuity \dref{54}, for any given
$\varepsilon>0$, it follows from the ``$z$ part'' of \dref{add3}
that
\begin{equation}\label{add4}
\left\{\begin{array}{l} \disp \dfrac{d}{dt}\|z(t)\|^2+
2\gamma_0\|\nabla z(t)\|^2 \le 2\|z\|_{L^4}\|\nabla
z\|\|z_*\|_{L^4}+2\beta \|g\|_\infty\|w\|\|z\|+
2\|\gamma(w_*)\crr\disp -\gamma(w_{**})\|_{L^\infty}\|\nabla
z_{**}\|\|\nabla z\| \crr \disp \le 2\|z\|_{L^4}\|\nabla
z\|\|z_*\|_{L^4}+2\beta \|g\|_\infty\|w\|\|z\|+
2l_1\|w_*-w_{**}\|_{L^2}\|\nabla z_{**}\|\|\nabla z\|\crr\disp
  \le 2\|z\|_{L^4}\|\nabla z\|\|z_*\|_{L^4}+2\beta
\|g\|_\infty\|w\|\|z\|+
2l_1\left(\dfrac{\gamma_0}{2\varepsilon^2c_1}\|w\|^2_{L^2}\|\|\nabla
z_{**}\|^2+\dfrac{\varepsilon^2c_1}{\gamma_0}\|\nabla z\|^2\right).
\end{array}\right.
\end{equation}
Putting $\varepsilon^2=\gamma_0^2/l_1$ in \dref{add4}, we obtain
\begin{equation}\label{57}
  \dfrac{d}{dt}\|z(t)\|^2+ 2\gamma_0\|\nabla
z(t)\|^2 \le
  2\|z\|_{L^4}\|\nabla z\|\|z_*\|_{L^4}+2\beta
\|g\|_\infty\|w\|\|z\|+ \dfrac{l_1}{2\gamma_0c_1}\|w\|^2_{L^2}\|\nabla
z_{**}\|.
\end{equation}
Similar arguments to the ``$w$ part'' of  \dref{add3}, we have
\begin{equation}\label{58}
  \dfrac{d}{dt}\|w(t)\|^2+ 2k_0\|\nabla
w(t)\|^2 \le
  2\|z\|_{L^4}\|\nabla w\|\|w_*\|_{L^4} + \dfrac{l_2}{2k_0\acute{c_1}}\|w\|^2_{L^2}\|\nabla
w_{**}\|^2.
\end{equation}
Sum \dref{57} and \dref{58} to get
\begin{equation}\label{add5}
\begin{array}{l}
\disp
 \dfrac{d}{dt}[\|z(t)\|^2+\|w(t)\|^2]+ \gamma_0\|\nabla
z(t)\|^2 +k_0\|\nabla w(t)\|^2 \crr\disp \le
  2\|z\|_{L^4}\|\nabla z\|\|z_*\|_{L^4}+2\beta
\|g\|_\infty\|w\|\|z\|+ \dfrac{l_1}{2\gamma_0c_1}\|w\|^2_{L^2}\|\nabla
z_{**}\|+2\|z\|_{L^4}\|\nabla w\|\|w_*\|_{L^4} \crr\disp +
\dfrac{l_2}{2k_0\acute{c_1}}\|w\|^2_{L^2}\|\nabla w_{**}\|^2.
\end{array}
\end{equation}
Apply the H\"{o}lder inequality and the Sobolev inequality to the
right-hand side of \dref{add5} to obtain, for any given
$\varepsilon_1>0$, that
\begin{equation}\label{add6}
\begin{array}{l}
\disp
 \dfrac{d}{dt}[\|z(t)\|^2+\|w(t)\|^2]+ \gamma_0\|\nabla
z(t)\|^2 +k_0\|\nabla w(t)\|^2 \crr\disp \le
  2d\|\nabla z\|^2\|z_*\|_{L^4}+2\beta
\|g\|_\infty\|w\|\|z\|+ \dfrac{l_1}{2\gamma_0c_1}\|w\|^2_{L^2}\|\nabla
z_{**}\|+2d\|\nabla z\|\|\nabla w\|\|w_*\|_{L^4} \crr\disp +
\dfrac{l_2}{2k_0\acute{c_1}}\|w\|^2_{L^2}\|\nabla w_{**}\|^2\crr \le

2d\|\nabla z\|^2\|z_*\|_{L^4}+2\beta \|g\|_\infty\|w\|\|z\|+
\dfrac{l_1}{2\gamma_0c_1}\|w\|^2_{L^2}\|\nabla z_{**}\|\crr\disp
+2d\left(\dfrac{\varepsilon_0^2}{2}\|\nabla w\|^2
+\dfrac{1}{\varepsilon_1^2}\|\nabla z\|^2\|w_*\|^2_{L^4}\right)  +
\dfrac{l_2}{2k_0\acute{c_1}}\|w\|^2_{L^2}\|\nabla w_{**}\|^2.
\end{array}
\end{equation}
where $d>0$ is the constant from Sobolev inequality that
$\|f\|_{L^4(\Omega)}\le d\|f\|_{H^1(\Omega)}$ for all $f\in
H^1(\Omega)$.

Set $\varepsilon_1^2=k_0\acute{c}_1/2d$ in \dref{add6} to get
   \begin{equation}\label{add7}
   \begin{array}{l}
   \disp
    \dfrac{d}{dt}[|z(t)|^2+|w(t)|^2]+ \gamma_0c_1\|
   z(t)\|^2 +\dfrac{k_0\acute{c}_1}{2}\|w(t)\|^2 \crr\disp \le
     2d\|z\|^2\|z_*\|_{L^4}+\beta
   \|g\|_\infty(|w|^2+|z|^2)+ \dfrac{l_1}{2\gamma_0c_1}|w|^2\|
   z_{**}\|+\dfrac{2d^2}{k_0\acute{c}_1}\|
   z\|^2\||w_*\|^2_{L^4}\crr\disp +
   \dfrac{l_2}{2k_0\acute{c}_1}|w|^2\|w_{**}\|^2\crr\disp  \le
   \left(2d\|z_*\|_{L^4}+\dfrac{2d^2}{k_0\acute{c}_1}\|w_*\|^2_{L^4}\right)\|
   z\|^2+\left(\beta\|g\|_\infty
    + \dfrac{l_1}{2\gamma_0^2c_1}\|
   z_{**}\|^2 +\dfrac{l_2}{2k_0\acute{c}_1}\|
   w_{**}\|^2\right)|w|^2\crr\disp +\beta\|g\|_\infty|z|^2\crr\disp \le
   \dfrac{\gamma_0c_1}{2}\|
   z\|^2+\left(\beta\|g\|_\infty+\dfrac{l_1}{\gamma_0c_1}\|
   z_{**}\|^2+ \dfrac{l_2}{2k_0\acute{c}_1}\|
   w_{**}\|^2\right)|w|^2+\beta\|g\|_\infty|z|^2.
   \end{array}
   \end{equation}
   In the first term of the last row of \dref{add7}, we used the
   assumption \dref{55}. From \dref{add7}, we conclude that
   $$
   \begin{array}{l}
   \disp \dfrac{d}{dt}[|z(t)|^2+|w(t)|^2]+ \dfrac{\gamma_0c_1}{2}\|
   z(t)\|^2 +\dfrac{k_0\acute{c}_1}{2}\| w(t)\|^2\crr \disp
   \hspace{1cm} \le\left(\beta\|g\|_\infty+\dfrac{l_1}{2\gamma_0c_1}\|
   z_{**}\|^2+ \dfrac{l_2}{2k_0\acute{c}_1}\|
   w_{**}\|^2\right)|w|^2+\beta\|g\|_\infty|z|^2,
   \end{array}
   $$
   from which we obtain
   \begin{equation}\label{add8}
   \dfrac{d}{dt}[|z(t)|^2+|w(t)|^2] \le
    M(t)|w(t)|^2+N|z(t)|^2
    \end{equation}
   or
   \begin{equation}\label{add9}
    \dfrac{d}{dt}[|z(t)|^2+|w(t)|^2] \le
    (M(t)+N)[|w(t)|^2+|z(t)|^2),
    \end{equation}
    where
    $$
    M(t)=\beta\|g\|_\infty +\dfrac{l_1}{2\gamma_0c_1}\| z_{**}(t)\|^2
    +\dfrac{l_2}{2k_0\acute{c}_1}\| w_{**}(t)\|^2, N=\beta\|g\|_\infty.
    $$
   Since $M(\cdot)+N$ is integrable in $[0,T]$ with respect to $t$, we
   obtain,  from \dref{add9},  that
   \begin{equation}\label{add10}
   \dfrac{d}{dt}\left\{e^{-(M+N)}[|z(t)|^2+|w(t)|^2]\right\} \le0,
   \forall \; t\in [0,T] \hbox{ a.e.}.
    \end{equation}
   This together with $z(0)=w(0)=0$ gives
   $$
   |z(t)|^2+|w(t)|^2\le 0,  \forall \; t\in [0,T] \hbox{ a.e.}.
   $$
   Therefore, $z_*=z_{**}, w_*=w_{**}$. The proof is complete.
   \end{proof}

   \begin{remark}\label{Re4.1} If we denote (see, e.g.,\cite{[8]})
 $$
    {\rm Re(t)}=\dfrac{4d}{\gamma_0c_1}\|z(t)\|_{L^4} \hbox{ (the Reynold  number)};
    {\rm Ra(t)} =\dfrac{4d^2\|w(t)\|_{L^4}^2}{\gamma_0k_0c_1\acute{c}_1} \hbox{(the eigh number)},
   $$.

Then, condition $(\ref{55})$ is reduced to condition:
$$Re(t)+Ra(t)<1$$
\end{remark}
\begin{remark}\label{3}
 Theorem \ref{Th.2} is a generalization of the results
of \cite{[8]}. In \cite{[8]},  the boundary condition for the
velocity of fluid is given by the standard boundary condition,  that
is, the homogeneous Dirichlet boundary condition, and the  boundary
condition for the temperature of fluid is given by the
non-homogeneous Dirichlet boundary condition.
\end{remark}


\begin{thebibliography}{99}




\bibitem{[15]}
 D. Adhikari, C. Caob and J. Wu, The 2D Boussinesq equations with
 vertical viscosity and vertical diffusivity, {\it J. Differential Equations}, 249(2010),
 1078-1088.

\bibitem{[18]} G.V. Alekseev and  R.V. Brizitskii,  Control problems for
stationary magnetohydrodynamic equations of a viscous
heat-conducting fluid under mixed boundary conditions, {\it Comput.
Math. Math. Phys.},  45(2005), 2049-2065.


\bibitem{[14]}
B. Climent-Ezquerra, F.  Guill\'{e}n-Gonz\'{a}lez and M.A.
Rojas-Medar, Time-periodic solutions for a generalized Boussinesq
model with Neumann boundary conditions for temperature, {\it  Proc.
R. Soc. Lond. Ser. A Math. Phys. Eng. Sci.},  463(2085)(2007),
2153-2164.


\bibitem{[16]}
J. Ding and B.Z. Guo, Blow-up and global existence for nonlinear
parabolic equations with Neumann boundary conditions, {\it  Comput.
Math. Appl.}, 60(2010),  670-679.


\bibitem{[3]}
J.I. Diaz and  G. Galiano, On the Boussinesq system with nonlinear
thermal diffusion, {\it Nonlinear Anal.},  30(1997), 3255-3263.



\bibitem{[2]} G. Galiano, Spatial and time localization of solutions of the
Boussinesq system with nonlinear thermal diffusion, {\it Nonlinear
Anal.},  42(2000), 423-438.



\bibitem{[13]}  H. Kim, The existence and
uniqueness of very weak solutions of the stationary
    Boussinesq system, {\it Nonlinear Anal.}, 75(2012), 317-330.




\bibitem{[1]}
S.A. Lorca, J.L. Boldrini, The initial value problem for a
generalized Boussinesq model, {\it Nonlinear Anal.}, 36(1999),
457-480.

\bibitem{[4]} S.A. Lorca and  J.L. Boldrini, The initial value problem for a
generalized Boussinesq model: regularity and global existence of
strong solutions, {\it Matem\'{a}tica Contempor\'{a}nea}, 11(1996),
175-201.

\bibitem{[5]}  S.A. Lorca and J.L. Boldrini, Stationary solutions for generalized
Boussinesq model, {\it J. Differential
      Equations}, 124(1996),  201-225.


\bibitem{[7]}  H. Morimoto, Non-stationary Boussinesq equations,  {\it
J. Fac. Sci. Univ. Tokyo Sect. IA Math.},  39(1)(1992),  61-75.


\bibitem{[6]}  K. \={O}eda, On the initial value problem for the heat convection
equation of Boussinesq
      approximation in a time-dependent domain, {\it Proc. Japan Acad. Ser. A Math. Sci.},
       64(5)(1988), 143-146.


\bibitem{[9]} R. Temam, {\it Navier-Stokes Equations, Studies in Mathematics and
its Applications},  North-Holland, Amsterdam, 1984.

\bibitem{[17]} R. Temam, {\it Navier-Stokes Equations: Theory and Numerical
    Analysis}, AMS, Providence, Rhodel Island, 2001.



\bibitem{[12]}
A.B.  Smyshlyaev and D.A. Tereshko, On solvability of stationary
boundary value problem for the heat and mass transfer equations,
{\it FEMJ}, 5(1)(2004), 41-52.



\bibitem{[10]}  M. Shinbrot and  W.P. Kotorynski, The initial value problem for a
viscous heat-conducting fluid,     {\it  J. Math. Anal. Apple.},
45(1974), 1-22.

\bibitem{[8]}  X.  Tang, Xianjiang, On the existence and uniqueness of the solution to the
Navier-Stokes Equations,      {\it Acta Mathematica Scientica},
15(3)(1995), 342-351.




\bibitem{[11]} Elder J.  Villamizar-Roa,  M.A. Rodr\'{i}guez-Bellido,
M.A. Rojas-Medar, The Boussinesq system with mixed nonsmooth
boundary
 data, {\it  C. R. Math. Acad. Sci. Paris},  343(3)(2006), 191-196.








\end{thebibliography}
\end{document}